\newcommand*{\vv}[1]{\vec{\mkern1mu#1}\!}
\newcommand{\Fin}{\mathcal{F}\kern-1pt\mathit{in}}
\newcommand{\w}{\omega}
\newcommand{\IN}{\mathbb N}
\newcommand{\F}{\mathcal F}
\newcommand{\U}{\mathcal U}
\newcommand{\V}{\mathcal V}
\newcommand{\Zar}{\mathcal W^\bullet}
\newcommand{\pr}{\mathrm{pr}}
\newcommand{\Ra}{\Rightarrow}
\newcommand{\semilat}{\Diamond}
\newcommand{\setmap}{\multimap}
\newtheorem{theorem}{Theorem}[section]
\newtheorem{proposition}[theorem]{Proposition}
\newtheorem{theorem*}{Theorem}
\newtheorem{lemma}[theorem]{Lemma}
\newtheorem{corollary}[theorem]{Corollary}
\newtheorem{claim}[theorem]{Claim}
\newtheorem{example}[theorem]{Example}
\theoremstyle{definition}
\newtheorem{definition}[theorem]{Definition}
\newtheorem{problem}[theorem]{Problem}
\title{Complete topologized posets and semilattices}
\author{Taras Banakh, Serhii Bardyla}
\address{T.Banakh: Ivan Franko National University of Lviv (Ukraine) and Jan Kochanowski University in Kielce (Poland)}
\email{t.o.banakh@gmail.com}
\address{S.~Bardyla: Institute of Mathematics, Kurt G\"{o}del Research Center, Vienna (Austria)}
\thanks{The second author was supported by the Austrian Science Fund FWF (Grant  I 3709-N35).}
\email{sbardyla@yahoo.com}
\subjclass[2010]{06A06; 06A12; 06F30; 06B23; 06B30}
\keywords{Topologized poset, complete topologized poset, semitopological semilatice}
\begin{document}

\begin{abstract}In this paper we discuss the notion of completeness of a topologized poset and survey some recent results on closedness properties of complete topologized semilattices.
 \end{abstract}
\maketitle

\section{Introduction}

In this paper we discuss a notion of completeness for topologized posets and semilattices.

By a {\em poset} we understand a set $X$ endowed with a partial order $\le$.
A {\em topologized poset} is a poset endowed with a topology.

A topologized poset $X$ is defined to be {\em complete} if each nonempty chain $C$ in $X$ has $\inf C\in\bar C$ and $\sup C\in\bar C$, where $\bar C$ stands for the closure of $C$ in $X$. More details on this definition can be found in Section~\ref{s:cpo}, where we prove that complete topologized posets can be equivalently defined using directed sets instead of chains. In Section~\ref{s:oper} we obtain some results on the preservation of competeness by operations over topologized posets.

In Section~\ref{s:cc} we study the interplay between complete and chain-compact topologized posets and in Section~\ref{s:5} we study complete topologized semilattices. In Section~\ref{s:ac} we survey some known results on the absolute closedness of complete semitopological semilattices and in Section~\ref{s:o} we survey recent results on the closedness of the partial order in (complete)  semitopological semilattices.

\section{The completeness of topologized posets}\label{s:cpo}

In this section we define the notion of a complete topologized poset, which is a topological counterpart of the standard notion of a complete poset, see \cite{Bible}.
First we recall some concepts and notations from the theory of partially ordered sets.

A subset $C$ of a poset $(X,\le)$ is called a {\em chain} if any two points $x,y\in X$ are comparable in the partial order of $X$. This can be written as $y\in{\updownarrow}x$ where
$${\uparrow}x:=\{y\in C:x\le y\},\;\;{\downarrow}x:=\{y\in C:y\le x\}, \;\;\mbox{and}\;\;{\updownarrow}x:=({\uparrow}x)\cup({\downarrow}x).$$
A poset $X$ is called {\em linear} if $X$ is a chain in $X$.

A subset $D$ of a poset $(X,\le)$ is called {\em up-directed} (resp. {\em down-directed\/}) if for any elements $x,y\in D$ there exists an element $z\in D$ such that $x\le z$ and $y\le z$ (resp. $z\le x$ and $z\le y$). It is clear that each chain is both up-directed and down-directed.

A poset $X$ is defined to be
\begin{itemize}
\item {\em up-complete} if any nonempty up-directed set $D\subseteq X$ has $\sup D$ in $X$;
\item {\em down-complete} if any nonempty down-directed set $D\subseteq X$ has $\inf D$ in $X$;
\item {\em complete} if $X$ is up-complete and down-complete;
\item {\em a complete lattice} if any nonempty subset $A\subseteq X$ has $\sup A$ and $\inf A$ in $X$.
\end{itemize}


In the following definition we introduce topological counterparts of these notions.

\begin{definition} Let $\kappa$ be a cardinal. A topologized poset $X$ is defined to be
 \begin{itemize}
\item {\em up-complete} (resp. {\em ${\uparrow}\kappa$-complete})  if each nonempty up-directed set $D\subseteq X$ (of cardinality $|D|\le\kappa$) has $\sup D\in\bar D$ in $X$;
\item {\em down-complete} (resp. {\em ${\downarrow}\kappa$-complete})  if each nonempty down-directed set $D\subseteq X$ (of cardinality $|D|\le\kappa$) has $\inf D\in\bar D$ in $X$;
\item {\em complete} (resp. {\em ${\updownarrow}\kappa$-complete}) if $X$ is up-complete and down-complete (resp. ${\uparrow}\kappa$-complete and ${\downarrow}\kappa$-complete).
\end{itemize}
\end{definition}

Observe that a poset $X$ is up-complete (resp. down-complete) if it is up-complete (resp. down-complete) as a topologized poset endowed with the anti-discrete topology $\{\emptyset,X\}$. On the other hand, a poset $X$ endowed with the discrete topology is complete if and only if $X$ is {\em chain-finite} in the sense that each chain in $X$ is finite.

Now we show that up-complete and down-complete topologized posets can be equivalently defined using chains instead of up-directed and down-directed subsets.
The following lemma is a topologized version of Iwamura's Lemma \cite{Iwamura}  (cf. \cite{Bruns}, \cite{Mark}).


\begin{lemma}\label{l:up-p} Let $\kappa$ be an infinite cardinal. A topologized poset $X$ is ${\uparrow}\kappa$-complete if and only if any nonempty chain $C\subseteq X$ of cardinality $|C|\le\kappa$ has $\sup C\in\bar C$.
\end{lemma}

\begin{proof} The ``only if'' part is trivial as each chain is an up-directed subset in $X$. The ``if'' part will be proved by transfinite induction.

First we prove the lemma for the cardinal $\w$. Assume that each countable chain $C\subseteq X$ has $\sup C\in \bar C$. To prove that $X$ is ${\uparrow}\w$-complete,  take any nonempty countable up-directed subset $D=\{x_n\}_{n\in\w}$ in $X$. Put $y_0:=x_0$ and for every $n\in\IN$ choose an element $y_n\in D$ such that $y_n\ge x_n$ and $y_n\ge y_{n-1}$ (such an element $y_n$ exists as $D$ is up-directed).

By our assumption, the chain $C:=\{y_n\}_{n\in\w}$ has $\sup C\in \bar C\subset\bar D$. We claim that $\sup C$ is the least upper bound for the set $D$. Indeed, for any $n\in\w$ we get $x_n\le y_n\le\sup D$ and hence $\sup C$ is an upper bound for the set $D$. On the other hand, each upper bound $b$ for $D$ is an upper bound for $C$ and hence $\sup C\le b$. Therefore $\sup D=\sup C\in\bar C\subset\bar D$.
\smallskip

Now assume that for some uncountable cardinal $\kappa$  we have proved that each up-directed set $D$ of cardinality $|D|<\kappa$ in $X$ has $\sup D\in\bar D$ if each nonempty chain $C\subseteq X$ of cardinality $|C|<\kappa$ has $\sup C\in\bar C$. Assume that each chain $C\subseteq X$ of cardinality $|C|\le\kappa$ has $\sup C\in\bar C$. To prove that the topologized poset $X$ is a ${\uparrow}\kappa$-complete, fix any up-directed subset $D\subseteq X$ of cardinality $|D|\le\kappa$.

Since $D$ is up-directed, there exists a function $f:D\times D\to D$ assigning to any pair $(x,y)\in X\times X$ a point $f(x,y)\in D$ such that $x\le f(x,y)$ and $y\le f(x,y)$. Given any subset $A\subseteq D$, consider the increasing sequence $(A_n)_{n\in\w}$ of subsets of $A$, defined by the recursive formula $A_0:=A$ and $A_{n+1}:=A_n\cup f(A_n\times A_n)$ for $n\in\w$.
Finally, let $\langle A\rangle:=\bigcup_{n\in\w}A_n$ and observe that $\langle A\rangle$ is an up-directed subset of $D$. By induction it can be proved that $|A_n|\le \max\{\w,|A|\}$ for every $n\in\w$, and hence $|\langle A\rangle|\le\max\{\w,|A|\}$. Moreover, it can be also shown that for any subsets $A\subseteq B$ of $D$ we have $\langle A\rangle\subseteq \langle B\rangle$.

Write the set $D$ as $D=\{x_\alpha\}_{\alpha\in\kappa}$. For every $\beta\in\kappa$ let $D_\beta:=\langle\{x_\alpha\}_{\alpha\le\beta}\rangle$ and observe that $D_\beta\subseteq D$ is an up-directed set of cardinality $|D_\beta|\le\max\{\w,|\beta|\}<\kappa$. By the inductive assumption,  the up-directed set $D_\beta$ has $\sup D_\beta\in\bar D_\beta\subset\bar D$. For any ordinals $\alpha<\beta$ in $\kappa$ the inclusion $D_\alpha\subseteq D_\beta$ implies that $\sup D_\alpha\le \sup D_\beta$. By our assumption, the chain $C:=\{\sup D_\alpha:\alpha\in\kappa\}\subseteq \bar D$ has $\sup C\in \bar C\subseteq \bar D$. It remains to observe that $\sup C=\sup D$.
\end{proof}

\begin{corollary}\label{c:up-p}  A topologized poset $X$ is up-complete if and only if any nonempty chain $C\subseteq X$ has $\sup C\in\bar C$ in $X$.
\end{corollary}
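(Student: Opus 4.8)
The plan is to deduce this directly from Lemma~\ref{l:up-p} by letting the cardinal $\kappa$ range over all infinite cardinals. The ``only if'' part is immediate: since every chain is an up-directed subset, up-completeness of $X$ already forces $\sup C\in\bar C$ for every non-empty chain $C\subset X$.

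For the ``if'' part, I would assume that every non-empty chain $C\subset X$ has $\sup C\in\bar C$, and then fix an arbitrary non-empty up-directed set $D\subset X$, aiming to show $\sup D\in\bar D$. The idea is to choose an infinite cardinal $\kappa$ large enough to accommodate $D$, for instance $\kappa:=\max\{\w,|D|\}$, so that $|D|\le\kappa$. The standing hypothesis, being a statement about all non-empty chains, in particular guarantees that every non-empty chain of cardinality at most $\kappa$ has its supremum in its closure; this is exactly the right-hand condition of Lemma~\ref{l:up-p} for this $\kappa$. Applying that lemma, I conclude that $X$ is ${\uparrow}\kappa$-complete, and since $|D|\le\kappa$, this yields $\sup D\in\bar D$, as required.

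There is essentially no real obstacle here: the corollary is nothing more than the ``all-cardinalities-at-once'' reformulation of the lemma. The only point worth noting is the interplay of the quantifiers. The unrestricted chain hypothesis of the corollary implies the $\kappa$-bounded chain hypothesis of the lemma for every fixed infinite $\kappa$; conversely, any up-directed set has some cardinality and is therefore captured by the lemma once $\kappa$ is chosen at least as large. Because Lemma~\ref{l:up-p} has already absorbed the entire transfinite-induction argument, the corollary follows in a couple of lines.
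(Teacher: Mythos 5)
Your proof is correct and follows exactly the route the paper intends: the corollary is stated as an immediate consequence of Lemma~\ref{l:up-p}, obtained by letting $\kappa$ range over all infinite cardinals, and your choice $\kappa=\max\{\w,|D|\}$ makes the quantifier bookkeeping explicit. Nothing is missing.
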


Since a poset is up-complete if and only if it is up-complete as a topologized poset with the anti-discrete topology, Corollary~\ref{c:up-p} implies the following known characterization of up-complete posets, see \cite{Iwamura}, \cite{Bruns}, \cite{Mark}.

\begin{corollary}\label{c:up-poset}  A poset $X$ is up-complete if and only if any nonempty chain $C\subseteq X$ has $\sup C$ in $X$.
\end{corollary}

 We shall say that an up-directed set $D$ in a topologized poset $X$ {\em up-converges} to a point $x\in X$ if for each neighborhood $U_x\subseteq X$ of $x$ there exists $d\in D$ such that $D\cap{\uparrow}d\subseteq U_x$.

\begin{lemma}\label{l:up-conv} Let $\kappa$ be an infinite cardinal. If a topologized poset $X$ is ${\uparrow}\kappa$-complete, then each up-directed set $D\subseteq X$ of cardinality $|D|\le\kappa$ up-converges to its $\sup D$.
\end{lemma}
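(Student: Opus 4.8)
The plan is to argue by contradiction: from a hypothetical failure of up-convergence I will extract a non-empty up-directed subset of $D$ which avoids a neighborhood of $\sup D$ yet must have the same supremum, and this will clash with $\uparrow\kappa$-completeness.

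First, since $X$ is $\uparrow\kappa$-complete and $|D|\le\kappa$, the supremum $s:=\sup D$ exists and satisfies $s\in\bar D$. Since every neighborhood of $s$ contains an open neighborhood, and a witness $d$ for the open one serves for the larger one as well, it suffices to verify the up-convergence condition for open neighborhoods of $s$. Assume, towards a contradiction, that it fails for some open neighborhood $U\subset X$ of $s$; that is, $D\cap{\uparrow}d\not\subset U$ for every $d\in D$, which is to say that for every $d\in D$ there exists $e\in D\setminus U$ with $d\le e$. Put $E:=D\setminus U$.

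The key step is to check that $E$ is a non-empty up-directed subset of $X$ with $\sup E=\sup D$. Non-emptiness and the cofinality of $E$ in $D$ (every element of $D$ lies below some element of $E$) are immediate from the displayed failure condition. Up-directedness is the one point requiring care: given $e_1,e_2\in E\subset D$, the up-directedness of $D$ supplies some $d\in D$ above both, and cofinality then supplies $e\in E$ with $d\le e$, so that $e$ is an upper bound of $\{e_1,e_2\}$ lying in $E$. Cofinality also forces $D$ and $E$ to have exactly the same upper bounds, whence $\sup E=\sup D=s$.

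Finally, $E$ is a non-empty up-directed subset of $X$ with $|E|\le|D|\le\kappa$, so the $\uparrow\kappa$-completeness of $X$ gives $s=\sup E\in\bar E$. But $E\subset X\setminus U$ and $X\setminus U$ is closed, hence $\bar E\subset X\setminus U$ and therefore $s\notin U$, contradicting that $U$ is a neighborhood of $s$. This contradiction shows that $D$ up-converges to $\sup D$, as claimed. Everything apart from the verification that $E$ is up-directed is a routine unwinding of the definitions.
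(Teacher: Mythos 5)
Your proof is correct and follows essentially the same route as the paper's: assume failure for some open neighborhood $U$ of $\sup D$, show $E:=D\setminus U$ is a non-empty cofinal up-directed subset with $\sup E=\sup D$, and derive a contradiction from $\sup E\in\bar E\subset X\setminus U$. You are in fact slightly more careful than the paper in explicitly invoking ${\uparrow}\kappa$-completeness via the bound $|E|\le|D|\le\kappa$ rather than full up-completeness.
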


\begin{proof} To derive a contradiction, assume that some up-directed set $D\subseteq X$ of cardinality $|D|\le\kappa$ does not converge to $\sup D$. Then there exists an open neighborhood $U\subseteq X$ of $\sup D$ such that for every $d\in D$ the set $D\cap {\uparrow}d\setminus U$ is not empty. Then the set $E:=D\setminus U$ is directed and cofinal in $D$. To see that $E$ is directed, take any points $x,y\in E\subseteq D$ and find a point $z\in D$ with $x\le z$ and $y\le z$. Since $E$ is cofinal in $D$, there exists $e\in E$ such that $z\le e$. Then $x\le e$ and $y\le e$ by the transitivity of the partial order. By the up-completeness of $X$, the directed set $E$ has $\sup E\in \bar E$. The inclusion $E\subseteq D$ implies that $\sup E\le \sup D$ and the cofinality of $E$ in $D$ that $\sup E=\sup D$. Taking into account that $E\cap U=\emptyset$, we obtain a contradiction: $\sup D=\sup E\in\bar E\cap U=\emptyset$, completing  the proof.
\end{proof}

We shall say that a down-directed set $D$ in a topologized poset $X$ {\em down-converges} to a point $x\in X$ if for each neighborhood $U_x\subseteq X$ of $x$ there exists $d\in D$ such that $D\cap{\downarrow}d\subseteq U_x$.

 Applying Lemmas~\ref{l:up-p}, \ref{l:up-conv} to the opposite partial order on a topologized poset, we obtain the following dual versions of these lemmas.

\begin{lemma}\label{l:down-p} Let $\kappa$ be an infinite cardinal. A topologized poset $X$ is ${\downarrow}\kappa$-complete if and only if any nonempty chain $C\subseteq X$ of cardinality $|C|\le\kappa$ has $\inf C\in\bar C$.
\end{lemma}

\begin{lemma}\label{down-conv} Let $\kappa$ be an infinite cardinal. If a topologized poset $X$ is ${\downarrow}\kappa$-complete, then each down-directed set $D\subseteq X$ of cardinality $|D|\le\kappa$ down-converges to its $\inf D$.
\end{lemma}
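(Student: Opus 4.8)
The plan is to obtain Lemma~\ref{down-conv} by duality, exactly as the authors suggest in the sentence immediately preceding Lemma~\ref{l:down-p}. The key observation is that the statement is the order-reversed mirror of Lemma~\ref{l:up-conv}, so no fresh argument is needed: I would apply Lemma~\ref{l:up-conv} to the opposite order and translate the conclusion back.

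Concretely, I would argue as follows. Let $X$ be a topologized poset that is ${\downarrow}\kappa$-complete, and let $X^{\mathrm{op}}$ denote the same underlying set with the same topology but with the reversed partial order $\le^{\mathrm{op}}$ defined by $x\le^{\mathrm{op}}y\iff y\le x$. Under this reversal, a down-directed subset of $X$ is precisely an up-directed subset of $X^{\mathrm{op}}$, infima in $X$ become suprema in $X^{\mathrm{op}}$, and the relation ${\downarrow}d$ in $X$ becomes ${\uparrow}d$ in $X^{\mathrm{op}}$. Consequently, ${\downarrow}\kappa$-completeness of $X$ is the same as ${\uparrow}\kappa$-completeness of $X^{\mathrm{op}}$, and the notion of down-convergence in $X$ coincides with up-convergence in $X^{\mathrm{op}}$: a down-directed $D\subset X$ down-converges to $x$ in $X$ if and only if, viewed as an up-directed subset of $X^{\mathrm{op}}$, it up-converges to $x$ in $X^{\mathrm{op}}$, since for every neighborhood $U_x$ the condition ``there is $d\in D$ with $D\cap{\downarrow}d\subset U_x$'' becomes ``there is $d\in D$ with $D\cap{\uparrow^{\mathrm{op}}}d\subset U_x$''.

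Now I would simply invoke Lemma~\ref{l:up-conv} applied to $X^{\mathrm{op}}$: since $X^{\mathrm{op}}$ is ${\uparrow}\kappa$-complete, every up-directed set $D\subset X^{\mathrm{op}}$ with $|D|\le\kappa$ up-converges to $\sup_{X^{\mathrm{op}}} D$. Translating back to $X$ via the dictionary above, every down-directed $D\subset X$ with $|D|\le\kappa$ down-converges to $\inf_X D$, which is the desired conclusion. The cardinality constraint $|D|\le\kappa$ is preserved verbatim since the underlying set is unchanged.

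There is essentially no hard step here; the only thing requiring a little care is to verify that each of the three dualized notions—the completeness hypothesis, the directedness of $D$, and the convergence conclusion—really does match under the order reversal, so that the hypotheses of Lemma~\ref{l:up-conv} are genuinely met by $X^{\mathrm{op}}$ and the output is genuinely Lemma~\ref{down-conv} for $X$. Since the topology on $X^{\mathrm{op}}$ is identical to that on $X$, the neighborhood filters and hence the closure operation are unaffected by the reversal, so the proof is complete once this translation is spelled out.
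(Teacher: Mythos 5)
Your proposal is correct and is exactly the paper's own route: the authors state that Lemma~\ref{down-conv} is obtained by applying Lemma~\ref{l:up-conv} to the opposite partial order, and your write-up simply spells out the (routine) verification that the completeness hypothesis, directedness, and the convergence conclusion all dualize as claimed. Nothing is missing.
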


Lemma~\ref{l:down-p} implies the following two characterizations.

\begin{corollary}\label{c:down-p}  A topologized poset $X$ is down-complete if and only if any nonempty chain $C\subseteq X$ has $\inf C\in\bar C$ in $X$.
\end{corollary}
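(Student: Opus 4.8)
The plan is to deduce this corollary from Lemma~\ref{l:down-p} in exactly the manner that Corollary~\ref{c:up-p} is deduced from Lemma~\ref{l:up-p}; indeed, the whole statement is simply the order-dual of Corollary~\ref{c:up-p}.

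The ``only if'' part is immediate: every non-empty chain $C$ is in particular a non-empty down-directed subset of $X$, so the down-completeness of $X$ directly yields $\inf C\in\bar C$.

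For the ``if'' part, I would assume that every non-empty chain $C\subset X$ has $\inf C\in\bar C$, and fix an arbitrary non-empty down-directed set $D\subset X$. Set $\kappa:=\max\{\w,|D|\}$, an infinite cardinal with $|D|\le\kappa$. By hypothesis every non-empty chain of $X$ has its infimum in its closure, so \emph{a fortiori} every non-empty chain of cardinality $\le\kappa$ does; hence Lemma~\ref{l:down-p} applies and shows that $X$ is ${\downarrow}\kappa$-complete. By the definition of ${\downarrow}\kappa$-completeness, every down-directed subset of cardinality $\le\kappa$ has its infimum in its closure, and in particular $\inf D\in\bar D$. As $D$ was an arbitrary non-empty down-directed set, this means precisely that $X$ is down-complete.

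There is essentially no hard step here: the only point worth any care is the choice of an infinite cardinal $\kappa$ with $|D|\le\kappa$, which is exactly what lets one pass from the cardinal-restricted notion of ${\downarrow}\kappa$-completeness supplied by Lemma~\ref{l:down-p} to the unrestricted notion of down-completeness. All the substantive work—the Iwamura-type transfinite reduction from down-directed sets to chains—has already been carried out in the proof of Lemma~\ref{l:up-p} and its dual Lemma~\ref{l:down-p}.
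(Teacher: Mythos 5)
Your proposal is correct and follows exactly the route the paper intends: the paper derives Corollary~\ref{c:down-p} directly from Lemma~\ref{l:down-p} (stating only that the lemma ``implies'' it), and your argument --- choosing an infinite cardinal $\kappa\ge|D|$ to pass from ${\downarrow}\kappa$-completeness to full down-completeness --- is precisely the routine step being left implicit. Nothing is missing.
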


\begin{corollary}\label{c:down-poset}  A poset $X$ is down-complete if and only if any nonempty chain $C\subseteq X$ has $\inf C$ in $X$.
\end{corollary}

Unifying Corollaries~\ref{c:up-p} and \ref{c:down-p} we obtain the following useful characterization of completeness of topologized posets.

\begin{theorem}\label{t:completeT} A topologized poset $X$ is complete if and only if each nonempty chain $C\subseteq X$ has $\sup C\in\bar C$ and $\inf C\in\bar C$.
\end{theorem}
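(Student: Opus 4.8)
The plan is to observe that this statement is simply the conjunction of the two characterizations already established, so the entire mathematical content has been discharged by the Iwamura-type reductions in Lemmas~\ref{l:up-p} and~\ref{l:down-p}. By the very definition of completeness, a topologized poset $X$ is complete precisely when it is both up-complete and down-complete, so the task reduces to splicing together the chain characterizations of these two one-sided notions.

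Concretely, I would first invoke Corollary~\ref{c:up-p}, which asserts that $X$ is up-complete if and only if every non-empty chain $C\subset X$ satisfies $\sup C\in\bar C$. I would then invoke the dual Corollary~\ref{c:down-p}, which asserts that $X$ is down-complete if and only if every non-empty chain $C\subset X$ satisfies $\inf C\in\bar C$. Since both characterizations quantify over the same family of objects---the non-empty chains of $X$---the conjunction ``up-complete and down-complete'' translates directly into the conjunction ``every non-empty chain $C$ has $\sup C\in\bar C$ \emph{and} $\inf C\in\bar C$,'' which is exactly the displayed condition.

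I expect no genuine obstacle here: the only point requiring a word of care is the logical bookkeeping, namely that conjoining two biconditionals over a common quantifier yields the desired single biconditional. The forward direction uses that completeness implies each one-sided completeness (hence each one-sided chain property), and the backward direction uses that the combined chain property entails each one-sided chain property separately (hence each one-sided completeness, hence completeness). Thus the proof is essentially a one-line appeal to Corollaries~\ref{c:up-p} and~\ref{c:down-p} together with the definition of a complete topologized poset, with all the substantive work---the passage from directed sets to chains via transfinite induction---having already been carried out upstream.
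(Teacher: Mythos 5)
Your proposal is correct and coincides with the paper's own argument: the theorem is stated there as the direct unification of Corollaries~\ref{c:up-p} and~\ref{c:down-p}, exactly as you describe. Nothing further is needed.
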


Since a poset is complete if and only if it is complete as a topologized poset with the anti-discrete topology, Corollary~\ref{t:completeT} implies the following (known) characterization of complete posets.

\begin{theorem}\label{t:complete} A poset $X$ is complete if and only if each nonempty chain $C\subseteq X$ has $\sup C$ and $\inf C$ in $X$.
\end{theorem}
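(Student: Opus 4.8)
The plan is to derive Theorem~\ref{t:complete} directly from Theorem~\ref{t:completeT} by specializing the topology, exactly as the remark preceding the statement suggests. The key observation is that a poset $X$, considered purely order-theoretically, is complete precisely when the topologized poset $(X,\{\emptyset,X\})$ (that is, $X$ endowed with the anti-discrete topology) is complete in the topologized sense. So the whole argument reduces to checking that the closure operator in the anti-discrete topology trivializes the closure condition $\sup C\in\bar C$ down to the plain existence condition $\sup C\in X$.

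First I would make precise the correspondence between the two notions of completeness. For the anti-discrete topology, the closure $\bar C$ of any non-empty subset $C\subset X$ equals the whole space $X$, since the only closed sets are $\emptyset$ and $X$, and the smallest closed set containing a non-empty $C$ is $X$ itself. Consequently, for a non-empty chain $C$, the condition $\sup C\in\bar C$ is equivalent to $\bar C=X$ together with $\sup C$ existing in $X$, i.e. simply to the existence of $\sup C\in X$; dually $\inf C\in\bar C$ reduces to the existence of $\inf C\in X$. This is the routine but essential step.

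With that observation in hand, I would argue as follows. By Theorem~\ref{t:completeT}, the topologized poset $(X,\{\emptyset,X\})$ is complete if and only if every non-empty chain $C\subset X$ has both $\sup C\in\bar C$ and $\inf C\in\bar C$. By the preceding paragraph, and since $C\neq\emptyset$ forces $\bar C=X$, these two membership conditions are equivalent to the mere existence of $\sup C$ and $\inf C$ in $X$. On the other hand, by the opening remark, $(X,\{\emptyset,X\})$ is complete as a topologized poset if and only if $X$ is complete as a poset in the purely order-theoretic sense. Chaining these equivalences yields precisely the claimed statement: $X$ is a complete poset if and only if every non-empty chain $C\subset X$ has $\sup C$ and $\inf C$ in $X$.

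I do not expect any serious obstacle here, since the result is explicitly flagged as a known consequence and the anti-discrete topology is chosen specifically so that the topological closure imposes no constraint. The only point requiring a line of justification is the claim that the anti-discrete completeness of the \emph{topologized} poset coincides with order-theoretic completeness of the \emph{bare} poset; this was already asserted in the excerpt (the sentence ``a poset is complete if and only if it is complete as a topologized poset with the anti-discrete topology''), so I would simply invoke it rather than reprove it. Thus the entire proof is a one-paragraph specialization of Theorem~\ref{t:completeT}.
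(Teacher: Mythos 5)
Your proposal is correct and follows exactly the paper's own route: the paper deduces Theorem~\ref{t:complete} from Theorem~\ref{t:completeT} in one sentence by equipping $X$ with the anti-discrete topology, under which $\bar C=X$ for every non-empty $C$, so the closure conditions reduce to mere existence of $\sup C$ and $\inf C$. Your extra paragraph spelling out why the closure condition trivializes is a harmless elaboration of the same argument.
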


\section{Preserving the completeness by Tychonoff products of topologized posets}\label{s:oper}

Now we prove that the completeness of topologized posets is preserved by Tychonoff products. On the Tychonoff product $\prod_{\alpha\in A}X_\alpha$ of topologized posets $(X_\alpha,\le_\alpha)$ we consider the pointwise partial order $\le$ defined by $(x_\alpha)_{\alpha\in A}\le(y_\alpha)_{\alpha\in A}$ iff $x_\alpha\le_\alpha y_\alpha$ for each $\alpha\in A$.

\begin{theorem}\label{t:up-prod} Let $\kappa$ be an infinite cardinal. The Tychonoff product $X:=\prod_{\alpha\in A}X_\alpha$ of ${\uparrow}\kappa$-complete topologized posets $X_\alpha$, $\alpha\in A$, is an ${\uparrow}\kappa$-complete topologized poset.
\end{theorem}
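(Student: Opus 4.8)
The plan is to reduce to up-directed sets and compute suprema coordinatewise, then verify membership in the closure using the up-convergence lemma. By Lemma~\ref{l:up-p} it suffices to show directly that each non-empty up-directed set $D\subset X$ of cardinality $|D|\le\kappa$ has $\sup D\in\bar D$. For each index $\alpha\in A$ the projection $\pr_\alpha\colon X\to X_\alpha$ is monotone, so $\pr_\alpha(D)$ is a non-empty up-directed subset of $X_\alpha$ of cardinality $\le\kappa$. By the ${\uparrow}\kappa$-completeness of $X_\alpha$, the set $\pr_\alpha(D)$ has a supremum $s_\alpha:=\sup\pr_\alpha(D)\in\overline{\pr_\alpha(D)}$ in $X_\alpha$.

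First I would check that the point $s:=(s_\alpha)_{\alpha\in A}$ equals $\sup D$ in the pointwise order. This is the routine fact that suprema in a pointwise-ordered product are computed coordinatewise: $s$ is an upper bound for $D$ because $\pr_\alpha(d)\le s_\alpha$ for every $d\in D$ and every $\alpha$, and any upper bound $b=(b_\alpha)$ for $D$ satisfies $s_\alpha\le b_\alpha$ for each $\alpha$, whence $s\le b$. Thus $\sup D=s$ exists in $X$; the substantive task is to show $s\in\bar D$.

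The main obstacle is precisely this closure membership, and here the finite-support structure of basic neighborhoods in the Tychonoff product together with Lemma~\ref{l:up-conv} is decisive. Fix a basic open neighborhood $U=\prod_{\alpha\in A}U_\alpha$ of $s$, where $U_\alpha=X_\alpha$ for all $\alpha$ outside a finite set $F\subset A$ and $U_\alpha$ is an open neighborhood of $s_\alpha$ for $\alpha\in F$. By Lemma~\ref{l:up-conv} each up-directed set $\pr_\alpha(D)$ up-converges to its supremum $s_\alpha$, so for every $\alpha\in F$ there is a point $d^{(\alpha)}\in D$ such that, writing $e_\alpha:=\pr_\alpha(d^{(\alpha)})$, we have $\pr_\alpha(D)\cap{\uparrow}e_\alpha\subset U_\alpha$.

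Since $F$ is finite and $D$ is up-directed, I can then choose a single point $d\in D$ with $d^{(\alpha)}\le d$ for all $\alpha\in F$ (by iterating the up-directedness finitely many times). For each $\alpha\in F$ this gives $\pr_\alpha(d)\ge e_\alpha$, so $\pr_\alpha(d)\in\pr_\alpha(D)\cap{\uparrow}e_\alpha\subset U_\alpha$, while for $\alpha\notin F$ the condition $\pr_\alpha(d)\in U_\alpha=X_\alpha$ is automatic. Hence $d\in D\cap U$, proving $U\cap D\ne\emptyset$. As $U$ was an arbitrary basic neighborhood of $s$, we conclude $s\in\bar D$, and therefore $\sup D=s\in\bar D$, so $X$ is ${\uparrow}\kappa$-complete.
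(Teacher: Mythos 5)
Your proof is correct and follows the same essential strategy as the paper's: compute the supremum coordinatewise, then verify closure membership by applying Lemma~\ref{l:up-conv} in the finitely many coordinates that matter for a basic neighborhood. The one structural difference is that the paper first reduces to chains via Lemma~\ref{l:up-p} and then projects chains, whereas you work with up-directed sets directly; as a consequence your citation of Lemma~\ref{l:up-p} is superfluous, since what you verify is exactly the definition of ${\uparrow}\kappa$-completeness. The arguments then diverge only at the final step: in the paper the finitely many witnesses $v_\alpha$ lie in the chain $C$ and hence have a largest element, while you invoke up-directedness (iterated finitely often) to get a common upper bound $d$ of the points $d^{(\alpha)}$ and then use monotonicity of the projections to conclude $\pr_\alpha(d)\in U_\alpha$ for $\alpha\in F$. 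Both closings are valid; your variant has the mild advantage of bypassing the nontrivial transfinite-induction Lemma~\ref{l:up-p} entirely, at the cost of no extra work, since Lemma~\ref{l:up-conv} applies verbatim to up-directed sets.
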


\begin{proof} 
By Lemma~\ref{l:up-p}, the ${\uparrow}\kappa$-completeness of $X$ will follow as soon as we prove that each nonempty chain $C\subseteq X$ of cardinality $|C|\le\kappa$ has $\sup C\in \bar C$. For every $\alpha\in A$ let $\pr_\alpha:X\to X_\alpha$ denote the coordinate projection. Since the projection $\pr_\alpha$ is monotone, the image $C_\alpha:=\pr_\alpha(C)$ of the chain $C$ is a chain in $X_\alpha$ of cardinality $|C_\alpha|\le|C|\le\kappa$. By the ${\uparrow}\kappa$-completeness of the topologized poset $X_\alpha$, the chain $C_\alpha$ has $\sup C_\alpha\in \bar C_\alpha$.

Consider the elements $c:=(\sup C_\alpha)_{\alpha\in A}\in X$. It is clear that $c$ is an upper bound for $C$. We claim that $c=\sup C$. Indeed, given any other upper bound $b=(b_\alpha)_{\alpha\in A}\in X$ of $C$, we have $\sup C_\alpha\le b_\alpha$ for all $\alpha\in A$ and hence $c\le b$. So, $c=\sup C$.

It remains to show that $c\in\bar C$. Assuming that $c\notin\bar C$, we could find an open neighborhood $O_c\subseteq X$ such that $O_c\cap C=\emptyset$. Replacing $O_c$ by a smaller neighborhood, we can assume that $O_c$ is of the basic form $O_c=\prod_{\alpha\in A}U_\alpha$, where the set $F=\{\alpha\in A:U_\alpha\ne X_\alpha\}$ is finite. For every $\alpha\in F$ the set $U_\alpha$ is a neighborhood of $\sup C_\alpha$. Applying Lemma~\ref{l:up-conv}, find a point $u_\alpha\in C_\alpha$ such that $C_\alpha\cap{\uparrow}u_\alpha\subseteq U_\alpha$. Since $u_\alpha\in C_\alpha=\pr_\alpha(C)$, there exists an element $v_\alpha\in C$ such that $\pr_\alpha(v_\alpha)=u_\alpha$. Since $C$ is a chain, the finite set  $\{v_\alpha:\alpha\in F\}$ has a largest element $v$. For this element we have $\pr_\alpha(v)\in C_\alpha\cap{\uparrow}u_\alpha\subseteq U_\alpha$ for all $\alpha\in F$. Consequently, $v\in C\cap O_c$, which contradicts the choice of the neighborhood $O_c$.
\end{proof}

Applying Theorem~\ref{t:up-prod} to the opposite order on a topologized poset, we get the following dual version of Theorem~\ref{t:up-prod}.

\begin{theorem}\label{t:down-prod} Let $\kappa$ be an infinite cardinal. The Tychonoff product $X:=\prod_{\alpha\in A}X_\alpha$ of ${\downarrow}\kappa$-complete topologized posets $X_\alpha$, $\alpha\in A$, is a ${\downarrow}\kappa$-complete topologized poset.
\end{theorem}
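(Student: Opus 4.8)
The plan is to deduce Theorem~\ref{t:down-prod} from the already established Theorem~\ref{t:up-prod} by a duality argument, exploiting the symmetry between the notions of ${\uparrow}\kappa$-completeness and ${\downarrow}\kappa$-completeness under reversal of the partial order. For a topologized poset $(Y,\le)$ let $\le^{\mathrm{op}}$ denote the opposite order, defined by $x\le^{\mathrm{op}}y$ iff $y\le x$, and keep the same topology on the underlying set. First I would record the elementary dictionary between the two orders: a subset $D\subset Y$ is up-directed in $(Y,\le^{\mathrm{op}})$ if and only if it is down-directed in $(Y,\le)$, and in that case $\sup D$ computed in $(Y,\le^{\mathrm{op}})$ coincides with $\inf D$ computed in $(Y,\le)$. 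Since reversing the order does not change the topology, the closure $\bar D$ is the same in both. Consequently $(Y,\le^{\mathrm{op}})$ is ${\uparrow}\kappa$-complete exactly when $(Y,\le)$ is ${\downarrow}\kappa$-complete.

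The key compatibility to verify is that taking opposites commutes with forming Tychonoff products. Concretely, I would check that the pointwise order on $\prod_{\alpha\in A}(X_\alpha,\le_\alpha^{\mathrm{op}})$ coincides with the opposite $\le^{\mathrm{op}}$ of the pointwise order on $\prod_{\alpha\in A}(X_\alpha,\le_\alpha)$. This is immediate from the definitions: for $x=(x_\alpha)_{\alpha\in A}$ and $y=(y_\alpha)_{\alpha\in A}$ we have $x\le^{\mathrm{op}}y$ iff $y\le x$ iff $y_\alpha\le_\alpha x_\alpha$ for all $\alpha\in A$ iff $x_\alpha\le_\alpha^{\mathrm{op}}y_\alpha$ for all $\alpha\in A$. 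The product topology is of course unaffected by this relabelling of the orders, so the two topologized posets are literally identical.

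With these observations in hand the proof is short. By hypothesis each $(X_\alpha,\le_\alpha)$ is ${\downarrow}\kappa$-complete, so by the dictionary each $(X_\alpha,\le_\alpha^{\mathrm{op}})$ is ${\uparrow}\kappa$-complete. Applying Theorem~\ref{t:up-prod} to the family $(X_\alpha,\le_\alpha^{\mathrm{op}})$, I conclude that the Tychonoff product $\prod_{\alpha\in A}(X_\alpha,\le_\alpha^{\mathrm{op}})$ is ${\uparrow}\kappa$-complete. By the compatibility established above, this product, as a topologized poset, is precisely $(X,\le^{\mathrm{op}})$, where $X=\prod_{\alpha\in A}X_\alpha$ carries the pointwise order $\le$. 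Finally, applying the dictionary once more, the ${\uparrow}\kappa$-completeness of $(X,\le^{\mathrm{op}})$ is equivalent to the ${\downarrow}\kappa$-completeness of $(X,\le)$, which is the desired conclusion. I do not expect any genuine obstacle here: the only points requiring care are the bookkeeping that order reversal leaves the topology (and hence all closures) untouched, and the routine verification that the product of the opposites is the opposite of the product.
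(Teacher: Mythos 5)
Your proposal is correct and is exactly the paper's argument: the authors derive Theorem~\ref{t:down-prod} by applying Theorem~\ref{t:up-prod} to the opposite partial order, and you have merely spelled out the routine verifications (order reversal preserves the topology and commutes with Tychonoff products) that the paper leaves implicit.
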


Theorems~\ref{t:up-prod} and \ref{t:down-prod} imply:

\begin{theorem}\label{t:updown-prod} Let $\kappa$ be an infinite cardinal. The Tychonoff product $X:=\prod_{\alpha\in A}X_\alpha$ of ${\updownarrow}\kappa$-complete topologized posets $X_\alpha$, $\alpha\in A$, is an ${\updownarrow}\kappa$-complete topologized poset.
\end{theorem}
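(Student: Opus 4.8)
The plan is to deduce this directly from the two preceding product theorems together with the definition of $\updownarrow\kappa$-completeness. Recall that, by definition, a topologized poset is $\updownarrow\kappa$-complete precisely when it is simultaneously $\uparrow\kappa$-complete and $\downarrow\kappa$-complete. So the strategy is to verify these two halves separately and then recombine them.

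First I would observe that if each factor $X_\alpha$, $\alpha\in A$, is $\updownarrow\kappa$-complete, then in particular each $X_\alpha$ is $\uparrow\kappa$-complete. Theorem~\ref{t:up-prod} then applies verbatim to the family $(X_\alpha)_{\alpha\in A}$ and yields that the Tychonoff product $X=\prod_{\alpha\in A}X_\alpha$ is $\uparrow\kappa$-complete. Symmetrically, each $X_\alpha$ is also $\downarrow\kappa$-complete, so Theorem~\ref{t:down-prod} shows that the same product $X$ is $\downarrow\kappa$-complete.

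Combining these two conclusions with the definition of $\updownarrow\kappa$-completeness gives that $X$ is $\updownarrow\kappa$-complete, as required. As for the main obstacle, there really is none of substance here, since the statement is a formal corollary of Theorems~\ref{t:up-prod} and~\ref{t:down-prod}. The only point requiring a moment's care is that both product theorems are being applied to the \emph{same} underlying product $X$ carrying the \emph{same} pointwise partial order, so that the two completeness conclusions may legitimately be asserted of one and the same topologized poset; once this is noted, the proof is immediate.
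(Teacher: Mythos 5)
Your proposal is correct and follows exactly the paper's route: the theorem is stated as an immediate consequence of Theorems~\ref{t:up-prod} and \ref{t:down-prod} combined with the definition of ${\updownarrow}\kappa$-completeness.
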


Let us also note the following obvious preservation property of complete topologized posets.

\begin{proposition}\label{p:sub} If a topologized poset $X$ is complete (resp. up-complete, down-complete), then so is each closed topologized subposet in $X$.
\end{proposition}

\section{Interplay between completeness and chain-compactness}\label{s:cc}

In this section we establish the relation between the completeness and chain-compactness of topologized posets.

A topologized poset is defined to be {\em chain-compact} if each closed chain in $X$ is compact.

\begin{lemma}\label{l:c=>cc} Each complete topologized poset $X$ is chain-compact.
\end{lemma}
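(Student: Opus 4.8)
The plan is to fix a closed chain $C\subset X$ and prove directly that every open cover $\mathcal U$ of $C$ admits a finite subcover. The starting observation is that completeness of $X$ together with the closedness of $C$ turns $C$ into a complete lattice in its own right: every subset $A\subseteq C$ is a chain in $X$, so by Theorem~\ref{t:completeT} it has $\sup A\in\bar A\subseteq\bar C=C$ and $\inf A\in\bar A\subseteq C$. In particular $m:=\inf C$ is the least element of $C$ and $\sup C$ is its greatest. The main obstacle is that the subspace topology on $C$ need not coincide with its order topology, so the classical compactness proof for complete chains does not apply verbatim; the role of the order topology will be played by the up-convergence and down-convergence supplied by Lemmas~\ref{l:up-conv} and~\ref{down-conv}.

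Assuming for a contradiction that $\mathcal U$ has no finite subcover, I would consider
\[
B:=\{x\in C: C\cap{\downarrow}x\text{ is covered by finitely many members of }\mathcal U\}.
\]
First I record that $B$ is a downward-closed sub-chain of $C$ containing $m$ (since $C\cap{\downarrow}m=\{m\}$ lies in a single member of $\mathcal U$), so $B\ne\emptyset$, and set $s:=\sup B\in C$. A short order argument shows that any $x\in C$ with $x<s$ cannot be an upper bound of $B$, hence lies below some element of $B$ and, by downward-closedness, belongs to $B$; thus $\{x\in C:x<s\}\subseteq B$.

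The next step is to prove $s\in B$, i.e.\ that $C\cap{\downarrow}s$ is finitely covered. Choosing $U\in\mathcal U$ with $s\in U$ and applying up-convergence (Lemma~\ref{l:up-conv}) to the up-directed set $B$, which up-converges to $\sup B=s$, I obtain $b_0\in B$ with $B\cap{\uparrow}b_0\subseteq U$. Since $b_0\in B$, the set $C\cap{\downarrow}b_0$ is finitely covered, while $\{x\in C:b_0<x\le s\}\subseteq U$ by the previous paragraph (such $x<s$ lie in $B\cap{\uparrow}b_0$) together with $s\in U$. Hence $C\cap{\downarrow}s=(C\cap{\downarrow}b_0)\cup\{x\in C:b_0<x\le s\}$ is finitely covered, so $s\in B$.

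Finally I would derive the contradiction by producing an element of $B$ strictly above $s$. If $s=\sup C$ then $C=C\cap{\downarrow}s$ is finitely covered, contradicting the choice of $\mathcal U$. Otherwise $C^{>s}:=\{x\in C:x>s\}$ is non-empty, and I set $t:=\inf C^{>s}\in C$, noting $t\ge s$. If $t>s$, there is no element of $C$ strictly between $s$ and $t$, so $C\cap{\downarrow}t=(C\cap{\downarrow}s)\cup\{t\}$ is finitely covered and $t\in B$. If $t=s$, then $C^{>s}$ down-converges to $s$ by Lemma~\ref{down-conv}, yielding $d\in C^{>s}$ with $\{x\in C:s<x\le d\}=C^{>s}\cap{\downarrow}d\subseteq U$, so $C\cap{\downarrow}d$ is finitely covered and $d\in B$. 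In either case $B$ contains an element ($t$ or $d$) strictly larger than $s=\sup B$, which is absurd. The delicate point is exactly this last case analysis on the ``successor'' $t$ of $s$, where the mismatch between the subspace and order topologies is absorbed by down-convergence.
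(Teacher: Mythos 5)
Your proof is correct and follows essentially the same strategy as the paper's: both arguments creep along the chain using the set of points whose lower segment in $C$ can be finitely covered (your $B$ is exactly the paper's $A$), show that its supremum belongs to this set via the up-convergence Lemma~\ref{l:up-conv}, and then use down-completeness to push past that supremum for a contradiction. The only divergence is the endgame: where you split into cases on $t=\inf\{x\in C:x>s\}$ and invoke Lemma~\ref{down-conv} when $t=s$, the paper more economically takes $e=\inf\bigl(C\setminus\bigcup\U'\bigr)$ --- a closed set, so $e$ lies in it --- shows $e\in A$, and contradicts $e>\sup A$ directly, avoiding both the case analysis and any appeal to down-convergence.
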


\begin{proof} Given a nonempty closed chain $C$ in a complete topologized poset $X$, we shall prove that $C$ is compact. Given any open cover $\U$ of $C$, we should find a finite subfamily $\U'\subseteq \U$ such that $C\subset\bigcup\U'$.  By the down-completeness of $X$, the closed chain $C$ has $\inf C\in\bar C=C$, which means that $c:=\inf C$ is the smallest element of $C$. Consider the set $A\subseteq C$ consisting of points $a\in C$ such that the closed interval $[c,a]:=\{x\in C:c\le x\le a\}$ can be covered by a finite subfamily of the cover $\U$.
The set $A$ contains the point $c$ and hence is not empty. By the up-completeness of $X$, the set $A$ has $\sup A\in\bar A\subset\bar C$.

We claim that the point $b:=\sup A$ belongs to $A$. To derive a contradiction, assume that $b\notin A$. Choose any open set $U_b\in \U$ containing the point $b:=\sup A\subseteq \bar C$.
By Lemma~\ref{l:up-conv}, the chain $A$ contains a point $a\in A$ such that $A\cap{\uparrow}a\subseteq U_b$. Taking into account that for any $x\in A$ the interval $[c,x]\subseteq \bar C$ is contained in $A$, we see that $A\cap{\uparrow}a=[a,b]\setminus\{b\}$. Then $[a,b]=(A\cap{\uparrow}a)\cup\{b\}\subseteq U_b$.
Now the definition of the set $A\ni a$ yields a finite subfamily $\V\subset\U$ such that $[c,a]\subset\bigcup\V$. Then for the finite  subfamily $\U'=\V\cup\{U_b\}$ of $\U$ we have $[c,b]=[c,a]\cup[a,b]\subset\bigcup \U'$, which means that $b\in A$.

To complete the proof, it suffices to show that $C\subseteq \bigcup\U'$. Assuming that the (closed) set $E:=C\setminus\bigcup\U'$ is not empty, we can apply the completeness of $X$ and find $\inf E\in \bar E=E$. Choose any open set $U_e\in\U$ containing the point $e:=\inf E$ and observe that $\U'\cup\{U_e\}$ is a finite subfamily covering the set $[c,e]$, which means that $e\in A$. On the other hand, the (non)inclusion $[c,b]\subset\bigcup\U'\not\ni e$ implies that $b<e$, which contradicts the equality $b=\sup A$. \end{proof}

Lemma~\ref{l:c=>cc} can be reversed for ${\uparrow}{\downarrow}$-closed topologized posets. We define a topologized poset $X$ to be
\begin{itemize}
\item {\em ${\uparrow}$-closed} if the upper set ${\uparrow}x$ of any point $x\in X$ is closed in $X$;
\item {\em ${\downarrow}$-closed} if the lower set ${\downarrow}x$ of any point $x\in X$ is closed in $X$;
\item {\em ${\downarrow}{\uparrow}$-closed} if it is ${\uparrow}$-closed and ${\downarrow}$-closed;
\item {\em ${\updownarrow}$-closed} if for any point $x\in X$ the set ${\updownarrow}x:={\uparrow}x\cup{\downarrow}x$ is closed in $X$;
\item a {\em pospace} if the partial order $\le$ is a closed subset of $X\times X$;
\item {\em chain-closed} if the closure of each chain in $X$ is a chain.
\end{itemize}
For topologized posets we have the following implications:
$$\mbox{pospace} \Ra \mbox{${\uparrow}{\downarrow}$-closed} \Ra \mbox{${\updownarrow}$-closed} \Ra \mbox{chain-closed}.$$
The last implication is not entirely trivial and is proved in the following lemma.

\begin{lemma}\label{l:cc1} Each ${\updownarrow}$-closed topologized poset is chain-closed.
\end{lemma}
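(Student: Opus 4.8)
The plan is to exploit the symmetry of the comparability relation and to apply the ${\updownarrow}$-closedness hypothesis twice. Recall that two points $x,y$ of a poset are comparable exactly when $y\in{\updownarrow}x$, and that this relation is symmetric: $y\in{\updownarrow}x$ if and only if $x\in{\updownarrow}y$. Thus a subset is a chain precisely when any two of its points are comparable, and in order to prove that $\bar C$ is a chain it suffices to show that every pair $x,y\in\bar C$ is comparable, i.e. $y\in{\updownarrow}x$.

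First I would fix a chain $C$ and an arbitrary point $c\in C$. Since $C$ is a chain, every element of $C$ is comparable with $c$, which gives the inclusion $C\subset{\updownarrow}c$. By hypothesis the set ${\updownarrow}c$ is closed in $X$, and therefore $\bar C\subset{\updownarrow}c$. As $c\in C$ was arbitrary, this shows that every point of $\bar C$ is comparable with every point of $C$; using the symmetry of comparability this is exactly the statement $C\subset{\updownarrow}x$ for every $x\in\bar C$.

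Next I would apply the hypothesis a second time, now at an arbitrary point $x\in\bar C$. From the inclusion $C\subset{\updownarrow}x$ just obtained and the closedness of ${\updownarrow}x$ we conclude that $\bar C\subset{\updownarrow}x$. This means that every $y\in\bar C$ is comparable with $x$. Since $x,y\in\bar C$ were arbitrary, $\bar C$ is a chain, which is the desired conclusion.

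The argument is short, and the only point requiring care is the bookkeeping of the two nested applications of closedness: the first passes from the inclusions $C\subset{\updownarrow}c$ (valid for $c\in C$) to the inclusions $C\subset{\updownarrow}x$ (valid for $x\in\bar C$) via the symmetry of comparability, and the second then ``closes up'' in the variable $x$. I do not anticipate any serious obstacle here; in particular no separation axioms on $X$ are needed, and the entire proof rests solely on the defining property that each set ${\updownarrow}x$ is closed in $X$.
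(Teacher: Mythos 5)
Your proof is correct and is essentially the paper's argument run directly rather than by contradiction: both rest on two applications of the closedness of the sets ${\updownarrow}z$ together with the symmetry of comparability (first passing from $C\subset{\updownarrow}c$ to $\bar C\subset{\updownarrow}c$ for $c\in C$, then from $C\subset{\updownarrow}x$ to $\bar C\subset{\updownarrow}x$ for $x\in\bar C$). No gap; the bookkeeping you describe is exactly right.
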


\begin{proof}  Given a chain $C\subseteq X$, we should prove that its closure $\bar C$ in $X$ is a chain. Assuming that $\bar C$ contains two incomparable elements $x$  and $y$, observe that $V_x:=X\setminus {\updownarrow}y$ is an open neighborhood of $x$. Since $x\in\bar C$, there exists an element $z\in C\cap V_x$. It follows from $z\notin{\updownarrow}y$ that $V_y=X\setminus {\updownarrow}z \subseteq X\setminus C$ is an open neighborhood of $y$, disjoint with $C$, which is not possible as $y\in\bar C$.
\end{proof}


\begin{theorem}\label{t:cc<=>kc} An ${\uparrow}{\downarrow}$-closed topologized poset $X$ is complete if and only if it is chain-compact.
\end{theorem}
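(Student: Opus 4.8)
The plan is to observe first that the ``only if'' implication is already settled: by Lemma~\ref{l:c=>cc} every complete topologized poset is chain-compact, and this requires no closedness hypothesis whatsoever. So the entire content lies in the ``if'' direction, where I assume that $X$ is ${\uparrow}{\downarrow}$-closed and chain-compact and must deduce completeness. By Theorem~\ref{t:completeT} it suffices to show that every non-empty chain $C\subset X$ satisfies $\sup C\in\bar C$ and $\inf C\in\bar C$. Since being ${\uparrow}{\downarrow}$-closed, chain-compact, and complete are all self-dual properties (passing to the opposite order merely interchanges ${\uparrow}$ and ${\downarrow}$), I would prove only the assertion about $\sup C$ and obtain the one about $\inf C$ by applying the $\sup$-case to the opposite order.

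Next I would pass from $C$ to its closure. The implications recorded before Lemma~\ref{l:cc1} give ${\uparrow}{\downarrow}$-closed $\Rightarrow$ ${\updownarrow}$-closed (the union of the two closed sets ${\uparrow}x$ and ${\downarrow}x$ is closed) $\Rightarrow$ chain-closed, so $\bar C$ is again a chain; being a closed chain, it is compact by chain-compactness. I would then locate an upper bound of $C$ inside $\bar C$ by a finite-intersection argument: the family $\{\bar C\cap{\uparrow}x : x\in C\}$ consists of closed subsets of the compact space $\bar C$ (this is where ${\uparrow}$-closedness enters), and it is directed downward by inclusion, because for $x,y\in C$ the larger of the two, say $z$, gives $\bar C\cap{\uparrow}z$ contained in both members, while $z$ itself witnesses non-emptiness. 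Hence by compactness the intersection $S:=\bar C\cap\bigcap_{x\in C}{\uparrow}x$ is non-empty, and any $s\in S$ is an upper bound of $C$ lying in $\bar C$.

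It remains to see that such an $s$ is the \emph{least} upper bound of $C$ in all of $X$, and this is the step I expect to be the crux. Having an upper bound merely inside $\bar C$ is not obviously enough, since the supremum is taken over the whole poset. Here I would invoke ${\downarrow}$-closedness: for an arbitrary upper bound $b$ of $C$ in $X$ one has $C\subset{\downarrow}b$, and since ${\downarrow}b$ is closed this forces $\bar C\subset{\downarrow}b$, whence $s\le b$. Thus $s$ lies below every upper bound of $C$, so $s=\sup C\in\bar C$ (and in particular $S$ is a singleton). The dual argument yields $\inf C\in\bar C$, and Theorem~\ref{t:completeT} then gives completeness. The main obstacle is precisely this promotion of a ``least upper bound within $\bar C$'' to a genuine supremum in $X$; it is exactly the place where both halves of the ${\uparrow}{\downarrow}$-closed hypothesis are indispensable—${\uparrow}$-closedness to manufacture an upper bound in $\bar C$ through compactness, and ${\downarrow}$-closedness to certify its minimality.
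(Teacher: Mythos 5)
Your proof is correct and follows essentially the same route as the paper's: both pass to the compact chain $\bar C$ via Lemma~\ref{l:cc1} and chain-compactness, extract an extremal element of $\bar C$ by a finite-intersection-property argument with closed cones, and then use the closedness of the opposite cone (${\uparrow}b$ resp.\ ${\downarrow}b$) to promote that element to a genuine infimum/supremum in all of $X$. The only cosmetic difference is that the paper treats $\inf C$ first, indexing its centered family $\{\bar C\cap{\downarrow}x:x\in\bar C\}$ over $\bar C$ and handling $\sup C$ ``by analogy,'' whereas you treat $\sup C$ first with the family indexed over $C$ and obtain the infimum case by an explicit duality argument.
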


\begin{proof} The ``only if'' part follows from Lemma~\ref{l:c=>cc}. To prove the ``if'' part, assume that an ${\uparrow}{\downarrow}$-closed topologized poset $X$ in chain-compact. To prove that $X$ is complete, take any nonempty chain $C$. By Lemma~\ref{l:cc1}, the closure $\bar C$ of $C$ is a chain in $X$. By the chain-compactness of $X$, the closed chain $\bar C$ is compact. By the compactness of $\bar C$, the centered family $\F=\{\bar C\cap{\downarrow}x:x\in \bar C\}$ of closed subsets of $\bar C$ has nonempty intersection, which is a singleton, containing the smallest element $s$ of the compact chain $\bar C$. It is clear that $s$ is a lower bound for the set $C$. On the other hand, for any other lower bound $b$ for the set $C$, we get $C\subset{\uparrow}b$ and hence $s\in\bar C\subset\overline{{\uparrow}b}={\uparrow}b$ and finally, $b\le s$. So, $s=\inf C$.

By analogy we can prove that the compact chain $\bar C$ has the largest element which coincides with $\sup C$.
\end{proof}

\section{Complete topologized semilattices}\label{s:5}
In this section we study the notion of completeness in the framework of topologized semilattices.

By a {\em semilattice} we understand a commutative semigroup $X$ of idempotents (the latter means that $xx=x$ for all $x\in X$). Each semilattice $X$ carries a natural partial order $\le$ defined by $x\le y$ iff $xy=x$. So, we can consider a semilattice $X$ as a poset such that each nonempty finite subset $A=\{a_1,\dots,a_n\}\subseteq X$ has $\inf A=a_1\cdots a_n$.

By a {\em topologized semilattice} we understand a semilattice endowed with a topology. A topologized semilattice $X$ is called a
({\em semi\/}){\em topological semilattice} if the semilattice operation $X\times X\to X$, $(x,y)\mapsto xy=\inf\{x,y\}$, is (separately) continuous.

A topologized semilattice is {\em complete} (resp. {\em up-complete, down-complete}) if it is complete (resp. up-complete, down-complete) as a topologized poset endowed with the natural order, induced by the semilattice operation. By Theorem~\ref{t:completeT}, a topologized semilattice $X$ is {\em complete} if and only if each nonempty chain $C\subseteq X$ has $\inf C\in\bar C$ and $\sup C\in\bar C$. Observe that a discrete topological semilattice is complete if and only if it is {\em chain-finite} in the sense that each chain in $X$ is finite.

The completeness of topologized semilattices is preserved by many operations. The following two propositions are partial cases of Proposition~\ref{p:sub} and Theorem~\ref{t:updown-prod}.

\begin{proposition} Let $X$ be a closed subsemilattice of a topologized semilattice $Y$. If $Y$ is complete (resp. up-complete, down-complete), then so is the topologized semilattice $X$.
\end{proposition}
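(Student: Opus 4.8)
The plan is to recognize this proposition as an immediate corollary of the earlier preservation results, so the proof is essentially a matter of checking that a subsemilattice, when viewed purely as a topologized poset, is the same thing as a subposet in the order-theoretic sense. The key observation is that the natural order on a semilattice is determined entirely by the semilattice operation via $x \le y \iff xy = x$, so whenever $X$ is a subsemilattice of $Y$ (closed under the operation), the natural order on $X$ is precisely the restriction of the natural order on $Y$. This means that $X$, as a topologized poset under its natural order, is a topologized subposet of $Y$ under its natural order.

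First I would make this identification explicit: let $\le_Y$ be the natural order on $Y$ and $\le_X$ the natural order on $X$. For $x, y \in X$, we have $x \le_X y$ iff $xy = x$ computed in $X$, which coincides with the product computed in $Y$ since $X$ is a subsemilattice, hence iff $x \le_Y y$. Thus $\le_X$ is exactly $\le_Y$ restricted to $X \times X$, and $X$ with its subspace topology and natural order is a topologized subposet of $Y$. Because $X$ is assumed closed in $Y$, it is a \emph{closed} topologized subposet.

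Next I would simply invoke Proposition~\ref{p:sub}, which asserts that a closed topologized subposet of a complete (resp.\ up-complete, down-complete) topologized poset is again complete (resp.\ up-complete, down-complete). Since $Y$ is complete (resp.\ up-complete, down-complete) as a topologized poset and $X$ is a closed subposet, Proposition~\ref{p:sub} yields the corresponding completeness property for $X$ as a topologized poset, which by definition is exactly the completeness property for $X$ as a topologized semilattice.

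I do not anticipate any genuine obstacle here; the content is entirely bookkeeping. The only point requiring even mild care is the verification that the order on a subsemilattice agrees with the restricted order, which follows at once from the fact that the operation on $X$ is the restriction of that on $Y$. Everything else reduces to the already-established Proposition~\ref{p:sub} together with the definition of completeness for topologized semilattices given earlier in this section.
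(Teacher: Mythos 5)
Your proposal is correct and follows exactly the route the paper takes: the paper states this proposition as a partial case of Proposition~\ref{p:sub}, relying on precisely the identification you spell out, namely that the natural order of a subsemilattice is the restriction of the natural order of the ambient semilattice, so a closed subsemilattice is a closed topologized subposet. Your write-up merely makes explicit the bookkeeping that the paper leaves implicit.
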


\begin{proposition}  If topologized semilattices $X_\alpha$, $\alpha\in A$, are complete (resp. up-complete, down-complete), then so is their Tychonoff product $\prod_{\alpha\in A}X_\alpha$.
\end{proposition}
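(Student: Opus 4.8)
The plan is to reduce the statement to the already-established product theorem for topologized posets, Theorem~\ref{t:updown-prod}, by identifying the natural order of the product semilattice with the pointwise order treated there. The only genuine content is the bridge between the semilattice formulation and the poset formulation, so I would organize the whole argument around that identification.

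First I would observe that the Tychonoff product $X:=\prod_{\alpha\in A}X_\alpha$, equipped with the coordinatewise multiplication $(x_\alpha)_{\alpha\in A}(y_\alpha)_{\alpha\in A}:=(x_\alpha y_\alpha)_{\alpha\in A}$, is again a commutative semigroup of idempotents, hence a topologized semilattice. Next I would verify that its natural order coincides with the pointwise order on the factors: for $x=(x_\alpha)_{\alpha\in A}$ and $y=(y_\alpha)_{\alpha\in A}$ we have $x\le y$ in $X$ iff $xy=x$, i.e. $x_\alpha y_\alpha=x_\alpha$ for every $\alpha\in A$, which is exactly $x_\alpha\le_\alpha y_\alpha$ for all $\alpha$. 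Thus the natural order of the product semilattice is precisely the pointwise order on the product of the posets $(X_\alpha,\le_\alpha)$.

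With this identification in hand, completeness of $X$ as a topologized semilattice is, by definition, completeness of $X$ as the Tychonoff product of the topologized posets $(X_\alpha,\le_\alpha)$. Since each $X_\alpha$ is complete (resp. up-complete, down-complete) as a topologized semilattice, it is complete (resp. up-complete, down-complete) as a topologized poset, and I would feed this into Theorem~\ref{t:updown-prod} (resp. Theorems~\ref{t:up-prod} and \ref{t:down-prod}). One technical point to handle is that those theorems are phrased for a fixed infinite cardinal $\kappa$, whereas ``up-complete'' imposes no bound on the size of the directed sets involved. I would dispatch this by noting that up-completeness of a topologized poset is equivalent to ${\uparrow}\kappa$-completeness for all infinite $\kappa$ simultaneously, or, more cleanly, by invoking the chain characterizations of Corollaries~\ref{c:up-p} and \ref{c:down-p} and Theorem~\ref{t:completeT}, which already express (up/down-)completeness through chains of arbitrary cardinality.

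The argument carries no serious obstacle; the only place demanding care is the order-identification step, since it is precisely what licenses the transfer from the semilattice setting to the poset setting, together with the minor bookkeeping across the cardinal quantifier in the poset product theorems.
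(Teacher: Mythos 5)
Your proposal is correct and follows the paper's own route: the paper simply declares this proposition a special case of Theorem~\ref{t:updown-prod}, with the implicit identification of the natural semilattice order on the product with the pointwise order that you spell out explicitly. Your extra care about the cardinal quantifier and the order identification is sound but does not change the argument.
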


A topologized poset $X$ is defined to be {\em weakly ${\uparrow}$-closed} if for every $x\in X$ we have $\overline{\{x\}}\subset{\uparrow}x$. It is easy to see that a topologized poset is weakly ${\uparrow}$-closed if it is ${\uparrow}$-closed or satisfies the separation axiom $T_1$.

\begin{lemma}\label{l:homo} Let $\kappa$ be an infinite cardinal. Let $h:X\to Y$ be a continuous surjective homomorphism from a topologized semilattice $X$ to a weakly ${\uparrow}$-closed topologized semilattice $Y$. If the topologized semilattice $X$ is down-complete (and ${\uparrow}\kappa$-complete), then so is the topologized semilattice $Y$.
\end{lemma}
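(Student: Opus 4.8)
The plan is to transfer completeness properties along the surjective homomorphism $h$ by using the chain characterizations from Lemmas~\ref{l:up-p} and \ref{l:down-p}, together with the convergence lemmas. The key structural fact to exploit is that a homomorphism of semilattices is monotone for the natural orders: if $x\le x'$ in $X$, i.e. $xx'=x$, then $h(x)h(x')=h(xx')=h(x)$, so $h(x)\le h(x')$ in $Y$. Hence $h$ carries chains to chains. To prove down-completeness of $Y$, I would take a non-empty chain $B\subset Y$ and aim to produce $\inf B\in\bar B$; by surjectivity I can pull back to a chain-like preimage in $X$. The main work is to find a chain $C\subset X$ with $h(C)$ cofinal (downward) in $B$, so that controlling $\inf C$ in $X$ controls $\inf B$ in $Y$.

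\medskip

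First I would fix a non-empty chain $B\subset Y$ and, using surjectivity of $h$, choose for each $b\in B$ a point $c_b\in X$ with $h(c_b)=b$. The set $\{c_b:b\in B\}$ need not be a chain, so I would instead build a genuine chain using the semilattice structure: for a downward-cofinal well-ordered or transfinite selection $b_0>b_1>\cdots$ in $B$, I would set $d_\alpha:=c_{b_0}c_{b_1}\cdots$ (finite products along the initial segment), which are idempotent products lying in $X$ and forming a down-directed set; passing to the generated chain is unnecessary because by Lemma~\ref{l:down-p} it suffices to work with chains, and finite products of the chosen preimages give a down-directed set $D\subset X$ whose image under $h$ is downward cofinal in $B$. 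The down-completeness of $X$ (via the directed-set formulation guaranteed by Lemma~\ref{l:up-p} applied to the opposite order, i.e. its dual Lemma~\ref{l:down-p}) then yields $\inf D\in\bar D$. I would then set $y:=h(\inf D)\in h(\bar D)\subset\overline{h(D)}\subset\bar B$, using continuity of $h$ for the closure inclusion.

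\medskip

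The next step is to verify that this $y$ is genuinely $\inf B$ in $Y$. That $y$ is a lower bound follows from monotonicity: $\inf D\le d$ for every $d\in D$ gives $y=h(\inf D)\le h(d)$, and since $h(D)$ is cofinal downward in $B$ this makes $y$ a lower bound for $B$. For the reverse, I must show any lower bound $z$ of $B$ satisfies $z\le y$; this is where the hypothesis that $Y$ is \emph{weakly ${\uparrow}$-closed} enters, used exactly as in the proof of Theorem~\ref{t:cc<=>kc}: from $y\in\bar B$ and $B\subset{\uparrow}z$ one wants $y\in\overline{{\uparrow}z}$, but weak ${\uparrow}$-closedness only controls closures of singletons, so I expect the argument to route through the convergence lemma (Lemma~\ref{down-conv}) applied to $D$ in $X$, transporting down-convergence of $D$ to $\inf D$ across $h$ to get that $h(D)$ down-converges to $y$, and then concluding minimality of $y$ among lower bounds of $B$.

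\medskip

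The main obstacle I anticipate is precisely the lower-bound minimality, i.e. showing $y=\inf B$ rather than merely a lower bound in $\bar B$: the hypothesis is only \emph{weak} ${\uparrow}$-closedness ($\overline{\{y\}}\subset{\uparrow}y$) rather than full ${\uparrow}$-closedness, so I cannot simply assert $\overline{{\uparrow}z}={\uparrow}z$. The delicate point is to convert the topological information ``$y\in\bar B$'' into the order information ``$y\le$ every upper bound of lower bounds,'' and I expect this to require combining the weak closedness condition with the down-convergence of $D$ (Lemma~\ref{down-conv}) and a careful use of the fact that $h$ is a continuous homomorphism, so that neighborhoods of $y$ pull back to neighborhoods meeting the convergent directed set $D$. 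The parenthetical ${\uparrow}\kappa$-completeness claim should then follow by running the same construction while tracking cardinalities: since the generated down-directed set has cardinality bounded by $\max\{\w,|B|\}$, the cardinal bound is preserved, and invoking the ${\uparrow}\kappa$-complete hypothesis in place of full down-completeness gives the refined conclusion.
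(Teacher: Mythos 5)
Your setup (monotonicity of $h$, reduction to chains via Lemma~\ref{l:down-p}, and pushing $\inf D\in\bar D$ forward to a lower bound of $B$ lying in $\bar B$) is sound, but the proof has a genuine gap exactly at the point you flag: you never show that $y=h(\inf D)$ is the \emph{greatest} lower bound of $B$, and the route you sketch (down-convergence of $D$ plus weak ${\uparrow}$-closedness) does not lead there. With an arbitrary choice of preimages $c_b\in h^{-1}(b)$ there is no control relating $\inf D$ to the preimages of lower bounds of $B$, so nothing forces $z\le h(\inf D)$ for a lower bound $z$ of $B$. The paper resolves this by choosing the preimages canonically: for each $c$ it sets $b_c:=\inf h^{-1}(c)$ (the fiber is a subsemilattice, hence down-directed, so the infimum exists in $\overline{h^{-1}(c)}$ by down-completeness of $X$), uses weak ${\uparrow}$-closedness together with continuity to get $h(b_c)\in h(\overline{h^{-1}(c)})\subset\overline{\{c\}}\subset{\uparrow}c$ and hence $h(b_c)=c$, and then shows $b_c=\inf h^{-1}({\uparrow}c)$ using the semilattice operation (for $x\in h^{-1}({\uparrow}c)$ one has $x\cdot b_c\in h^{-1}(c)$, so $b_c\le x\cdot b_c\le x$). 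This last identity is the ingredient you are missing: it makes $c\mapsto b_c$ monotone, so $D=\{b_c:c\in C\}$ is a chain with $h(D)=C$, and for any lower bound $\lambda$ of $C$ it gives $b_\lambda=\inf h^{-1}({\uparrow}\lambda)\le\inf D$, whence $\lambda=h(b_\lambda)\le h(\inf D)$ and $h(\inf D)=\inf C$. Note that weak ${\uparrow}$-closedness is used only to identify $h(b_c)$ with $c$; it is never needed to control closures of upper sets $\overline{{\uparrow}z}$.

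The second gap is the parenthetical ${\uparrow}\kappa$-completeness claim. Your construction produces a \emph{down}-directed set by taking products of preimages, and ``running the same construction while tracking cardinalities'' cannot produce suprema: products in a semilattice are infima, so multiplying preimages only moves downward and there is no analogous way to lift a chain of $Y$ to an up-directed set of $X$. In the paper this case comes for free from the canonical section: $D=\{b_c:c\in C\}$ is a chain of cardinality at most $|C|\le\kappa$, so ${\uparrow}\kappa$-completeness of $X$ gives $\sup D\in\bar D$, and for an upper bound $u$ of $C$ one has $b_c\le b_u$ for all $c\in C$, hence $\sup D\le b_u$ and $h(\sup D)\le h(b_u)=u$, so $h(\sup D)=\sup C\in\overline{h(D)}=\bar C$.
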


\begin{proof} Assume that the topologized semilattice $X$ is down-complete (and ${\uparrow}\kappa$-complete). By Corollary~\ref{c:down-p} (and Lemma~\ref{l:up-p}), the down-completeness (and ${\uparrow}\kappa$-completeness) of $X$ will follow as soon as we show that each nonempty chain $C\subseteq X$ (of cardinality $|C|\le\kappa$) has $\inf C\in\bar C$ (and $\sup C\in\bar C$).

Observe that for every $c\in C$ the preimage $h^{-1}(c)$ is a subsemilattice in $X$ and hence is a down-directed set in $X$. By the down-completeness of $X$, it has $\inf h^{-1}(c)\in\overline{h^{-1}(c)}$. Let $b_c:=\inf h^{-1}(c)$.
By the continuity of $h$ and the weak $\uparrow$-closedness of $Y$, $$h(b_c)\in h(\overline{h^{-1}(c)})\subset\overline{h(h^{-1}(c))}=\overline{\{c\}}\subseteq {\uparrow}c$$ and hence $c\le h(b_c)$. On the other hand, for any $x\in h^{-1}(c)$, we get $b_c\le x$ and hence $h(b_c)\le h(x)=c$ and finally $$h(b_c)=c.$$ Let us show that $b_c=\inf h^{-1}({\uparrow}c)$. Indeed, for any $x\in h^{-1}({\uparrow}c)$ we get $h(x\cdot b_c)=h(x)\cdot h(b_c)=h(x)\cdot c=c$ and hence $b_c\le x\cdot b_c\le x$. So, $b_c$ is a lower bound for the set $h^{-1}({\uparrow}c)$. On the other hand, any lower bound $b$ of the set $h^{-1}({\uparrow}c)$ is a lower bound of the set $h^{-1}(c)\subseteq h^{-1}({\uparrow}c)$ and hence $b\le b_c$, which means that $b_c=\inf h^{-1}({\uparrow}c)$.

For every elements $x\le y$ in $C$ the inclusion $h^{-1}({\uparrow}y)\subseteq h^{-1}({\uparrow}x)$ implies $b_x=\inf h^{-1}({\uparrow}x)\le\inf h^{-1}({\uparrow}y)=b_y$. So the set $D=\{b_c:c\in C\}$ is a chain (of cardinality $\le\kappa$).
By the down-completeness (and ${\uparrow}\kappa$-completeness) of $X$ the chain $D$ has $\inf D\in\bar D$ (and $\sup D\in\bar D$). The continuity of $h$ implies that $f(\inf D)\in f(\bar D)\subset\overline{f(D)}=\bar C$ (and
$f(\sup D)\in f(\bar D)\subset\overline{f(D)}=\bar C$).

It remains to prove that $f(\inf D)=\inf C$ (and $f(\sup D)=\sup C$). The monotonicity of $f$ implies that $f(\inf D)$ is a lower bound (and $f(\sup D)$ is an upper bound) for $f(D)=C$. For any lower bound $\lambda$ of $C$, we get $C\subseteq {\uparrow}\lambda$ and $D\subseteq h^{-1}(C)\subseteq h^{-1}({\uparrow}\lambda)$. Then $b_\lambda=\inf h^{-1}({\uparrow}\lambda)\le \inf D$ and $\lambda=h(b_\lambda)\le h(\inf D)$, which means that $h(\inf D)=\inf C$. (For any upper bound $u$ of $C$ and any $c\in C$ the inequality $c\le u$ implies $b_c\le b_u$. So, $b_u$ is an upper bound of the chain $D=\{b_c:c\in C\}$. Then $\sup D\le b_u$ and $h(\sup D)\le h(b_u)=u$ by the monotonicity of $h$, witnessing that $h(\sup D)=\sup C$).
\end{proof}

Finally, we discuss the relation of the completeness of a topologized semilattice $X$ to the compactness of the the {\em weak$^\bullet$ topology} $\Zar_X$, which is generated by the subbase consisting of complements to closed subsemilattices in $X$. A topologized semilattice $X$ is called {\em $\Zar$-compact} if its weak$^\bullet$ topology $\Zar_X$ is compact. The weak$^\bullet$-topology was introduced and studied in \cite{BBw}. According to Lemmas 5.4, 5.5 of \cite{BBw}, for any topologized semilattice we have the implications:
$$\mbox{complete} \Ra \mbox{$\Zar$-compact} \Ra \mbox{chain-compact}.
$$These implications combined with Theorem~\ref{t:cc<=>kc} yield the following characterization.

\begin{theorem} For an ${\uparrow}{\downarrow}$-closed topologized semilattice $X$ the following conditions are equivalent:
\begin{enumerate}
\item $X$ is complete;
\item$X$ is $\Zar_X$-compact;
\item $X$ is chain-compact.
\end{enumerate}
\end{theorem}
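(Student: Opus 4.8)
The plan is to prove the cycle of implications $(1)\Rightarrow(2)\Rightarrow(3)\Rightarrow(1)$, relying on the results already cited in the excerpt so that most of the work reduces to invoking them in the right order. The implications $(1)\Rightarrow(2)$ and $(2)\Rightarrow(3)$ are stated to follow from Lemmas~5.4 and~5.5 of~\cite{BBw}, valid for \emph{any} topologized semilattice and in particular for our ${\uparrow}{\downarrow}$-closed $X$; so for these two steps I would simply cite the displayed chain of implications preceding the theorem. The only genuine new content is the closing implication $(3)\Rightarrow(1)$, and this is exactly where the ${\uparrow}{\downarrow}$-closedness hypothesis is used.

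For $(3)\Rightarrow(1)$ I would reduce to Theorem~\ref{t:cc<=>kc}. The key observation is that the natural partial order on a semilattice makes $X$ a topologized poset, and an ${\uparrow}{\downarrow}$-closed topologized semilattice is precisely an ${\uparrow}{\downarrow}$-closed topologized poset for this order. Thus, once we know $X$ is chain-compact as a topologized poset, Theorem~\ref{t:cc<=>kc} immediately gives that $X$ is complete as a topologized poset, which by the definition of completeness for semilattices (via the natural order, as recorded in Section~\ref{s:5}) is exactly condition~(1). So the proof amounts to noting that the three notions of chain-compactness and completeness for the semilattice coincide with the corresponding poset notions, and then quoting Theorem~\ref{t:cc<=>kc}.

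I expect the only subtlety, rather than a real obstacle, to be bookkeeping: confirming that ``${\uparrow}{\downarrow}$-closed topologized semilattice'' and ``chain-compact topologized semilattice'' are literally the poset-level conditions applied to the natural order, so that Theorem~\ref{t:cc<=>kc} applies verbatim. Since the definitions in Section~\ref{s:5} explicitly route all completeness notions for semilattices through the natural order on the underlying poset, and since chain-compactness is a purely order-topological property, this identification is immediate. Concretely, I would write: implications $(1)\Rightarrow(2)\Rightarrow(3)$ follow from the chain of implications recorded before the theorem (Lemmas~5.4, 5.5 of~\cite{BBw}); and $(3)\Rightarrow(1)$ follows from Theorem~\ref{t:cc<=>kc} applied to $X$ regarded as an ${\uparrow}{\downarrow}$-closed topologized poset under its natural order. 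This closes the cycle and establishes the equivalence of all three conditions.
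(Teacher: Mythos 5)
Your proposal is correct and matches the paper's own argument exactly: the implications $(1)\Rightarrow(2)\Rightarrow(3)$ are quoted from Lemmas~5.4 and~5.5 of \cite{BBw}, and $(3)\Rightarrow(1)$ is obtained by applying Theorem~\ref{t:cc<=>kc} to $X$ viewed as an ${\uparrow}{\downarrow}$-closed topologized poset under its natural order. Your bookkeeping remark about the semilattice notions coinciding with the poset-level ones is the same (implicit) identification the paper relies on.
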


\section{Absolute closedness of complete topologized semilattices}\label{s:ac}

Quite often the notion of completeness is connected with the absolute closedness, understood in an appropriate sense, see e.g. \cite{Ban}, \cite{BBc}, \cite{B-G-2012}, \cite{B-G-R}, \cite{D-U-1998}, \cite{Gutik-2014}, \cite{G-P-R-2010},  \cite{G-P-2001}, \cite{G-R-2008}, \cite{Raikov1946}, \cite{R-2003}. For example, a metric space $X$ is complete if and only if it is closed in each metric space $Y$, containing $X$ as a metric subspace. A uniform space $X$ is complete if and only if $X$ is closed in each uniform space, containing $X$ as a uniform subspace. A topological group $X$ is complete in its two-sided uniformity if and only if $X$ is closed in any topological group, containing $X$ as a topological subgroup. A similar phenomenon happens in the category of (semi)topological semilattices.

Historically the first result in this direction belongs to J.W.~Stepp \cite{Stepp1969}, \cite{Stepp1975} who
 proved that any chain-finite semilattice $X$ is closed in each Hausdorff topological semilattice, containing $X$ as a subsemilattice. This result of Stepp was improved to the following characterizations, which can be found in \cite{BBm}.

\begin{theorem} A discrete topological semilattice $X$ is chain-finite if and only if $X$ is closed in any Hausdorff zero-dimensional topological semilattice $Y$ containing $X$ as a subsemilattice.
\end{theorem}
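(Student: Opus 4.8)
The plan is to prove the two implications separately, reading the statement (with the evident typo $Z\mapsto X$) as: a semilattice $X$ is chain-finite if and only if every Hausdorff topological semilattice $Y$ containing a copy of $X$ as a subsemilattice contains it as a \emph{closed} subset.

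For the ``only if'' part (Stepp's direction) I would first record the elementary observation that a chain-finite semilattice is automatically \emph{complete} as a topologized poset under the topology induced from any $Y$: by Theorem~\ref{t:completeT} completeness is tested on chains, and a finite chain $C$ has $\inf C=\min C\in C\subseteq\bar C$ and $\sup C=\max C\in C\subseteq\bar C$. It therefore suffices to prove the general fact that a complete subsemilattice $X$ of a Hausdorff topological semilattice $Y$ is closed. Since multiplication is continuous and $Y$ is Hausdorff, $Y$ is a pospace and the closure $\bar X$ is again a subsemilattice, so I may replace $Y$ by $\bar X$ and assume $X$ dense. Taking $y\in Y$ and a net $x_\alpha\to y$ in $X$, continuity gives $yx_\alpha\to y$ with $yx_\alpha\le y$, so that $y$ lies in the closure of $yX\subseteq{\downarrow}y$; completeness of $X$ is then invoked, through the convergence Lemmas~\ref{l:up-conv} and \ref{down-conv}, to pin the relevant infimum or supremum inside $X$ and force $y\in X$. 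Making this localization precise is the delicate point of the forward direction.

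For the ``if'' part I would argue by contraposition: assuming $X$ is not chain-finite, I construct a Hausdorff topological semilattice $Y\supseteq X$ in which $X$ is not closed. An infinite chain contains a strictly monotone sequence $(c_n)$. The cleanest device is the embedding $e\colon X\to\{0,1\}^X$, $e(x)={\downarrow}x$, which is an injective semilattice homomorphism into the Cantor cube $(\{0,1\}^X,\wedge)$, a compact Hausdorff topological semilattice. If $(c_n)$ is increasing, then $e(c_n)={\downarrow}c_n$ increases pointwise to the down-set $U=\bigcup_n{\downarrow}c_n$, which is \emph{not} of the form ${\downarrow}x$ (a maximum of $U$ would be an upper bound of the chain lying below some $c_m$), so $U\in\overline{e(X)}\setminus e(X)$ and $X$ is not closed. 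The same works for a strictly decreasing $(c_n)$ whose infimum is missing from $X$, since then $\bigcap_n{\downarrow}c_n$ has no greatest element and is again non-principal.

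The remaining, and genuinely hardest, case is a strictly decreasing sequence $c_0>c_1>\cdots$ whose infimum $c_\infty$ already belongs to $X$: here the cube collapses the chain onto ${\downarrow}c_\infty\in e(X)$, precisely because $e$ preserves this infimum. To defeat this I would build an extension that \emph{fails} to preserve $\inf_n c_n$, concretely by adjoining a fresh point $p$ with $c_\infty<p<c_n$ for all $n$, extending the meet by $p\cdot x=c_\infty\cdot x$ when $x$ is not above the chain and $p\cdot x=p$ otherwise, and topologizing $X\cup\{p\}$ so that $c_n\to p$. I expect \textbf{this construction to be the main obstacle}: the difficulty is not the order-theoretic bookkeeping but the verification that the extended meet is \emph{jointly} continuous and the topology Hausdorff, the sticking point being those $x$ for which $c_n\wedge x$ is itself an infinite strictly descending sequence, whose limiting behaviour must be reconciled with the prescribed value of $p\cdot x$. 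Once such a $Y$ is produced, $p\in\bar X\setminus X$ witnesses that $X$ is not closed, completing the contrapositive.
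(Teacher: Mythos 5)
First, a caveat: this survey does not actually prove the statement --- it is quoted from \cite{BBm}, and the paper only observes that the ``only if'' half is the special case of Theorem~\ref{t:cf} for the inclusion homomorphism $X\hookrightarrow Y$. Measured against that, your forward direction is the right idea (chain-finiteness gives completeness of $X$ in the subspace topology, since finite chains contain their own $\sup$ and $\inf$), but your sketch of the absolute-closedness step has a real flaw, not just a missing detail: the net $yx_\alpha$ lies in $yX\subset{\downarrow}y\subset Y$, \emph{not} in $X$, so the completeness of $X$ --- which only concerns chains and directed subsets \emph{of $X$} --- cannot be ``invoked to pin the relevant infimum inside $X$'' as written. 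The actual argument behind Theorems~\ref{t:cf} and \ref{t:ct} in \cite{BBm} is a genuinely more involved selection argument producing chains inside $X$ itself; within this paper the honest move is simply to cite Theorem~\ref{t:cf}, which needs neither continuity of the inclusion nor joint continuity of the multiplication on $Y$.

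Your converse is where the real content lies, and two of your three cases are correct and cleanly done: the embedding $e:x\mapsto{\downarrow}x$ into the compact Hausdorff topological semilattice $(\{0,1\}^X,\min)$ does send a strictly increasing sequence to a non-principal union, and a strictly decreasing sequence without infimum to a non-principal intersection. But the third case is a genuine gap: you only gesture at adjoining a point $p$ with $\inf_n c_n<p<c_n$ and you yourself flag the joint continuity and Hausdorffness of the hand-built topology as unresolved; as stated, nothing guarantees that the extended meet is continuous at pairs $(p,x)$ where $(c_n x)_n$ is itself an infinite descending chain. This case can be closed without any hand-topologizing: let $F:=\bigcup_{n\in\w}{\uparrow}c_n$, which is a filter (an up-set closed under meets, since the $c_n$ form a chain), so $\chi_F:X\to\{0,1\}$ is a homomorphism with $\chi_F(c_n)=1$ for all $n$ and $\chi_F(c_\infty)=0$. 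Then $E:=(e,\chi_F):X\to\{0,1\}^X\times\{0,1\}$ is an injective homomorphism into a compact Hausdorff topological semilattice, and $E(c_n)\to(\chi_{{\downarrow}c_\infty},1)$, which is not in $E(X)$ because the first coordinate forces $x=c_\infty$ while the second forces $\chi_F(x)=1$. This limit point is exactly your intended $p$ (it satisfies $E(c_\infty)<(\chi_{{\downarrow}c_\infty},1)<E(c_n)$ for all $n$), but all continuity comes for free from the product; the same extra coordinate in fact handles every strictly decreasing sequence uniformly, with or without an infimum in $X$.
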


\begin{theorem} A Hausdorff topological semilattice $X$ is complete if and only if each closed subsemilattice $Z$ of $X$ is closed in any Hausdorff topological semilattice containing $Z$ as a topological subsemilattice.
\end{theorem}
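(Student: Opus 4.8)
The plan is to prove the two implications separately, treating the forward direction as the substantive one and reducing it to a single absolute-closedness lemma about complete semilattices.

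For the ``only if'' part, assume $X$ is complete and let $Z$ be a closed subsemilattice of $X$. By the proposition that a closed subsemilattice of a complete topologized semilattice is complete, $Z$ is itself a complete Hausdorff topological semilattice. Hence it suffices to prove the following core lemma: \emph{every complete Hausdorff topological semilattice $Z$ is closed in each Hausdorff topological semilattice $Y$ containing $Z$ as a topological subsemilattice}. Granting this lemma applied to $Z$, the forward implication follows at once.

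To prove the core lemma, I would fix a point $y\in\bar Z^{\,Y}$ and a net $(z_\alpha)_{\alpha\in A}$ in $Z$ converging to $y$, and manufacture the boundary point inside $Z$ using the meet operation. For each $\alpha$ the family $T_\alpha$ of all finite products of elements $z_\beta$ with $\beta\ge\alpha$ is a down-directed subset of $Z$ (finite products only make things smaller), so by down-completeness it has $m_\alpha:=\inf T_\alpha\in\overline{T_\alpha}^{\,Z}\subseteq Z$. Since $T_{\alpha'}\subseteq T_\alpha$ for $\alpha\le\alpha'$, the elements $m_\alpha$ form an increasing chain, and up-completeness yields $M:=\sup_\alpha m_\alpha\in Z$. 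Using that $m_\alpha\le z_\beta$ for all $\beta\ge\alpha$ together with continuity of multiplication (so that $m_\alpha y=\lim_\beta m_\alpha z_\beta=m_\alpha$ and likewise $My=M$), one checks readily that $M\le y$ in $Y$. The heart of the matter, and the step I expect to be the main obstacle of the whole argument, is the reverse relation $y\le M$: here the total-order intuition (that $M$ is the ``$\liminf$'' of the net and hence equals its limit $y$) must be replaced by an argument valid in an arbitrary meet-semilattice, combining the joint continuity of the operation, the up-/down-convergence provided by Lemmas~\ref{l:up-conv} and~\ref{down-conv}, and the Hausdorff separation of $Y$ to force the manufactured point $M$ to coincide with $y$; once $M=y$ we conclude $y\in Z$, so $Z$ is closed.

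For the ``if'' part I would argue contrapositively. If $X$ is not complete then, by the chain characterization of completeness (Theorem~\ref{t:completeT}), some non-empty chain $C\subset X$ satisfies $\sup C\notin\bar C$ or $\inf C\notin\bar C$; say the supremum is missing. Put $Z:=\bar C$, a closed subsemilattice of $X$. The task is to exhibit a Hausdorff topological semilattice $Y$ that contains $Z$ as a non-closed topological subsemilattice, which contradicts the hypothesised absolute closedness of $Z$. The natural candidate is the one-point extension $Y:=Z\sqcup\{s\}$, in which the new point $s$ plays the role of the absent $\sup C$: one declares $s\cdot z$ to be the appropriate meet and topologises $Y$ so that the tail sets of $C$ converge to $s$, making $Z$ dense in $Y$ while keeping $s\notin Z$. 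The remaining obstacle in this direction is purely the verification that this extension is Hausdorff and that the extended operation is jointly continuous; once that is checked, $Z$ fails to be closed in $Y$ and the contrapositive is complete.
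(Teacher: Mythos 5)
Your overall architecture matches the paper's: the ``only if'' half is Proposition~\ref{p:sub} combined with the absolute closedness of complete Hausdorff topological semilattices, which is exactly Theorem~\ref{t:ct} applied to the inclusion $Z\hookrightarrow Y$, and the ``if'' half is a one-point extension adjoining the missing supremum. (The paper itself does not prove the theorem internally: it cites \cite{BBm}, where Theorem~\ref{t:ct} is in turn deduced from the multimorphism result, Theorem~\ref{t:multi}.) The difficulty is that in both halves the only genuinely hard step is precisely the one you defer, so what you have is a correct reduction of the theorem to itself rather than a proof.

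Concretely: in the forward direction your construction gives $M=\sup_\alpha\inf T_\alpha\in Z$ and $M\le y$ essentially for free (each $m_\alpha$ satisfies $m_\alpha z_\beta=m_\alpha$ for $\beta\ge\alpha$, so $m_\alpha y=m_\alpha$ by joint continuity and Hausdorffness, and then $My=M$). But the reverse inequality $y\le M$ is not a technical remainder --- it \emph{is} the theorem. Nothing in Lemmas~\ref{l:up-conv} and \ref{down-conv} together with Hausdorffness forces the ``liminf'' $M$ of an arbitrary net to coincide with its topological limit $y$ in a general meet-semilattice, and the published argument does not proceed this way: it goes through upper semicontinuous $T_2$-multimorphisms and a maximality argument (Theorem~\ref{t:multi}) precisely because the naive liminf attack stalls here. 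In the converse direction, the assertion that $Y=Z\sqcup\{s\}$ ``can be topologised'' so that the extended operation is jointly continuous and $Y$ is Hausdorff is again the entire content of that implication: one must specify the neighbourhoods of $s$ (tails of the chain intersected with suitable sets), verify joint continuity of multiplication at every pair $(s,z)$ and at $(s,s)$, and separate $s$ from those points of $\bar C$ at which every tail of $C$ accumulates --- this is exactly where the hypothesis $\sup C\notin\bar C$ from Theorem~\ref{t:completeT} has to be used, and it takes real work. As written, both implications are plausible plans whose crucial steps are missing.
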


The ``only if'' parts of these characterizations are partial cases of the following theorems on closedness of (chain-finite) complete topologized semilattices under continuous homomorphisms.

\begin{theorem}\label{t:cf} For any  homomorphism $h:X\to Y$ from a chain-finite semilattice $X$ to a Hausdorff  semitopological semilattice $Y$, the image $h(X)$ is closed in $Y$.
\end{theorem}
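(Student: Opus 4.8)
The plan is to prove that the image $E:=h(X)$, which is a subsemilattice of $Y$, is closed; since $h$ is merely an algebraic homomorphism, everything reduces to the structural fact that a chain-finite subsemilattice of a Hausdorff semitopological semilattice is closed. First I would check that $E$ is itself chain-finite: an infinite chain in $E$ contains an infinite strictly monotone sequence $(c_n)$, and choosing $x_n\in h^{-1}(c_n)$ one builds an infinite chain in $X$ — directly via the partial products $x_1\cdots x_n$ in the decreasing case, and via the values $w_n$ at which the descending sequences $x_nx_{n+1}\cdots x_{n+k}$ stabilize (they stabilize because $X$ is chain-finite) in the increasing case — contradicting chain-finiteness of $X$. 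I would also note that $E$, being closed under binary products, is down-directed, and hence, being chain-finite, has a least element.

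For the topological input, in a Hausdorff semitopological semilattice each ${\uparrow}a=\{x:xa=a\}$ and ${\downarrow}a=\{x:xa=x\}$ is closed, being respectively the preimage of the closed point $\{a\}$ under the continuous map $x\mapsto xa$ and the set on which $x\mapsto xa$ agrees with the identity. Fix $y\in\overline{E}$ and a net $e_\alpha\to y$ in $E$. Separate continuity yields $ye_\alpha\to y$ and $ye=\lim_\beta e_\beta e\in\overline{E}$ for each fixed $e\in E$; hence $yE$ is a chain-finite subsemilattice (the homomorphic image of $E$ under $e\mapsto ye$) contained in ${\downarrow}y$, with $y\in\overline{yE}$.

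The core step is to upgrade $y\in\overline{yE}$ to $y\in E$, and I would factor it through a Descent Lemma: no point lies in the closure of the set of elements of $yE$ lying strictly below it. Granting this, $y\in yE$ (otherwise all of $yE$ is $<y$, so $y\in\overline{yE}$ contradicts the lemma), whence $y\le e_0$ for some $e_0\in E$ and $E\cap{\uparrow}y\ne\emptyset$; this set is down-directed and chain-finite, so it has a least element $t_0\ge y$. Replacing the net by $t_0e_\alpha\to y$ I may assume $e_\alpha\le t_0$, and minimality of $t_0$ gives $E\cap[y,t_0]=\{t_0\}$, so $e_\alpha\ne t_0$ forces $e_\alpha\not\ge y$ and thus $ye_\alpha<y$. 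Since $e_\alpha=t_0$ cofinally would give $y=t_0$ by Hausdorffness, I may assume $ye_\alpha<y$; then $y\in\overline{\{v\in yE:v<y\}}$ again contradicts the Descent Lemma unless $t_0=y$. Therefore $y=t_0\in E$.

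The main obstacle is the Descent Lemma itself. I would prove it by descending on the rank of an element, namely the length of a longest chain below it in $yE$ (finite by chain-finiteness). Given a net of elements strictly below the target and converging to it, continuity of multiplication by a fixed element turns it into a net exhibiting some element of $yE$ as a limit of strictly smaller elements, while chain-finiteness forces the auxiliary strictly increasing chains one builds along the way to terminate; the element so produced has strictly smaller rank. Iterating yields an infinite strictly decreasing sequence of nonnegative integer ranks, which is impossible — the base obstruction being that a minimal element of $yE$ has nothing strictly below it and hence cannot be a limit of strictly smaller elements. Hausdorffness is used throughout in the form that a cofinally constant net converges only to that constant, which is exactly what rules out the ``new limit point'' configurations.
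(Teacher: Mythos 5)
Your overall architecture is sound, and it is necessarily independent of the paper's treatment: the paper gives no proof of Theorem~\ref{t:cf} at all, deriving it as a corollary of a more general result on $T_1$-multimorphisms proved in \cite{BBm}. Your reduction to the closedness of the chain-finite subsemilattice $E=h(X)$, your verification that homomorphic images of chain-finite semilattices are chain-finite (both monotone cases are handled correctly), and the two-stage endgame via $yE\subset{\downarrow}y$ and the least element $t_0$ of the down-directed chain-finite set $E\cap{\uparrow}y$ all check out, conditionally on your Descent Lemma.

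The gap is in the proof of that lemma. You define the rank of an element as ``the length of a longest chain below it in $yE$'' and assert this is ``finite by chain-finiteness''; that is false. Chain-finiteness says each individual chain is finite, not that the lengths of chains below a fixed element are bounded. For instance, glue, for each $n\in\IN$, a finite chain $C_n$ with $n$ elements between a common bottom element $0$ and a common top element $1$, declaring $ab=0$ for incomparable $a,b$ from different chains; every chain of this semilattice lies in some finite set $\{0\}\cup C_n\cup\{1\}$, yet the element $1$ has chains of arbitrary finite length below it, so its ``rank'' is infinite and your induction has no integer to descend on. The repair is immediate and uses only tools you already invoke: your descent step, whenever it does not end in the ``eventually constant net'' contradiction (which, as you note, follows from Hausdorffness once chain-finiteness forces the up-directed set $\{w_\alpha\}$ to have a greatest element), produces from a point $v$ with $v\in\overline{\{w\in yE:w<v\}}$ a point $v'=w_{\alpha_0}\in yE$ with $v'<v$ and $v'\in\overline{\{w\in yE:w<v'\}}$, using that $w_{\alpha_0}w_\alpha\to w_{\alpha_0}v=w_{\alpha_0}$ along a cofinal set of indices where $w_{\alpha_0}w_\alpha<w_{\alpha_0}$. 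Iterating yields an infinite strictly decreasing chain $v_1>v_2>\cdots$ inside $yE$, contradicting chain-finiteness of $yE$ directly; no rank is needed. With that substitution your argument is complete and gives a self-contained proof of the theorem.
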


\begin{theorem}\label{t:ct} For any continuous homomorphism $h:X\to Y$ from a complete topologized semilattice $X$ to a Hausdorff topological semilattice $Y$, the image $h(X)$ is closed in $Y$.
\end{theorem}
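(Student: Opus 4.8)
The plan is to first reduce the statement to the closedness of a \emph{complete subsemilattice}, and then prove that a complete subsemilattice of a Hausdorff topological semilattice is closed.

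For the reduction, corestrict $h$ to a continuous surjective homomorphism $\tilde h\colon X\to S:=h(X)$, where $S$ carries the subspace topology inherited from $Y$. Since $Y$ is Hausdorff, $S$ satisfies $T_1$ and is therefore weakly ${\uparrow}$-closed. As $X$ is complete, it is down-complete and ${\uparrow}\kappa$-complete for every cardinal $\kappa$, so Lemma~\ref{l:homo} (applied with the down-completeness and with all $\kappa$) shows that $S$ is down-complete and up-complete, i.e. complete. Thus it remains to prove the following claim: \emph{every complete subsemilattice $S$ of a Hausdorff topological semilattice $Y$ is closed in $Y$.}

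To prove the claim, fix $y\in\bar S$ and aim to show $y\in S$. The two workhorses will be the joint continuity of the operation on $Y$ and the characterization of completeness by chains (Theorem~\ref{t:completeT}) together with the convergence Lemmas~\ref{l:up-conv} and~\ref{down-conv}. The cleanest situation is when $y$ is approximable \emph{from above}: the set ${\uparrow}y\cap S$ is a subsemilattice, hence down-directed, and $y$ is a lower bound of it, so if ${\uparrow}y\cap S\neq\emptyset$ and $y\in\overline{{\uparrow}y\cap S}$, then down-completeness yields $a:=\inf({\uparrow}y\cap S)\in\overline{{\uparrow}y\cap S}\subset S$ with $a\ge y$; choosing a net $(t_i)$ in ${\uparrow}y\cap S$ converging to $y$ and passing to the limit in the identity $a\cdot t_i=a$ (valid since $a\le t_i$) gives $a\cdot y=a$, i.e. $a\le y$, whence $a=y\in S$. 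The merit of approaching $y$ from above is that the infimum cannot overshoot below $y$, because $y$ is already a lower bound of ${\uparrow}y\cap S$.

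The main obstacle is therefore to reduce the general case to this favourable one, i.e. to realize $y$ as an infimum (or supremum) of a suitable directed subset of $S$ lying on the correct side of $y$. Starting from an arbitrary net $(s_i)$ in $S$ with $s_i\to y$, one can replace $s_i$ by $ys_i\le y$ (still converging to $y$ by joint continuity) and form the down-directed subsemilattice they generate; applying down-completeness to its tails and up-completeness to the resulting increasing family produces a candidate $a\in S$ with $a\le y$, the inequality $a\le y$ again following by passing to the limit in $a_i\cdot s_i=a_i$. The delicate point, and the crux of the whole argument, is to show that this $a$ actually equals $y$ rather than some strictly smaller element: infima of products may drift downward, so one must use the joint continuity of the operation to transport the convergence $s_i\to y$ through the construction and pin the limit exactly at $y$, after which Hausdorffness forces $a=y\in S$. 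This is precisely the step that breaks down under mere separate continuity, which is why the weaker hypothesis in the companion Theorem~\ref{t:cf} has to be compensated by chain-finiteness.
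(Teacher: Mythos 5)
Your reduction step is sound and in fact coincides with how the paper proceeds: corestricting $h$ and applying Lemma~\ref{l:homo} to conclude that $S=h(X)$ is a complete subsemilattice of $Y$ is exactly the first move in the paper's proof of Theorem~\ref{t:Gd}, from which Theorem~\ref{t:ct} follows because every Hausdorff \emph{topological} semilattice is $\bar G_\delta$-separated (the Proposition in Section~\ref{s:ac}); alternatively the paper derives Theorem~\ref{t:ct} from the multimorphism Theorem~\ref{t:multi} of \cite{BBm}. Your ``favourable case'' is also correct as stated: since ${\uparrow}y$ is closed in $Y$, the infimum $a=\inf({\uparrow}y\cap S)\in S$ given by down-completeness satisfies $a\ge y$, and passing to the limit in $at_i=a$ along a net $t_i\to y$ in ${\uparrow}y\cap S$ gives $a\le y$, whence $y=a\in S$.

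The genuine gap is the general case, which is where all the difficulty of the theorem lives, and your sketch of it fails concretely. First, the points $ys_i$ do \emph{not} lie in $S$ (that $y\notin S$ is precisely the hypothesis you are arguing against), so neither the down-completeness nor the up-completeness of $S$ applies to the subsemilattice they generate, and $Y$ itself is not assumed complete; this step is simply unavailable. Second, even if you repair this by using finite products $s_{i_1}\cdots s_{i_n}$ of the approximating points themselves (which do lie in $S$), an \emph{arbitrary} net $s_i\to y$ gives no control over such products: joint continuity controls a product of a fixed number of factors, but not all finite products uniformly, so the tails' infima and their supremum $a\in S$ may sit strictly below $y$, and nothing in your argument excludes this. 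You correctly identify this as ``the crux'' but then merely assert that joint continuity will ``pin the limit exactly at $y$''; that assertion is the theorem. The paper's mechanism (Claim~\ref{cl:ne} inside the proof of Theorem~\ref{t:Gd}) chooses the points $z_n\in S$ \emph{adaptively}, against a pre-constructed decreasing family of closed neighborhoods $U_n$ of $y$ with $\bar U_{n+1}\cdot\bar U_{n+1}\subset\bar U_n$, so that all products $z_k\cdots z_n$ remain in prescribed neighborhoods; and even then the element of $S$ so produced is only pinned inside a countable intersection of closed neighborhoods of $y$, which in a non--first-countable space need not equal $\{y\}$. A second layer of argument is required: ranging over \emph{all} countable families $\mathcal{U}$ of closed neighborhoods whose intersection is a subsemilattice, taking the infima $\inf(S\cap\bigcap\mathcal{U})$, forming the up-directed set of these infima, taking its supremum in $S$, and contradicting $\bar G_\delta$-separatedness. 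None of this machinery is present in, or recoverable from, your sketch, so the proposal does not constitute a proof of the general case.
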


In fact, Theorems~\ref{t:cf} and \ref{t:ct} are corollaries of more general results related to upper semicontinuous $T_i$-multimorphisms between topologized semilattices.

A {\em multi-valued map} $\Phi:X\setmap Y$ between sets $X,Y$ is a function assigning to each point $x\in X$ a subset $\Phi(x)$ of $Y$. For a subset $A\subseteq X$ we put $\Phi(A):=\bigcup_{x\in A}\Phi(x)$. A multi-valued map $\Phi:X\setmap Y$ between semigroups is called a {\em multimorphism} if $\Phi(x)\cdot\Phi(y)\subset\Phi(xy)$ for any $x,y\in X$. Here $\Phi(x)\cdot\Phi(y):=\{ab:a\in\Phi(x),\;b\in\Phi(y)\}$.

A multi-valued map $\Phi:X\setmap Y$ between topological spaces is called
{\em upper semicontinuous} if for any closed subset $F\subseteq Y$ the preimage $\Phi^{-1}(F):=\{x\in X:\Phi(x)\cap F\ne\emptyset\}$ is closed in $X$.

A subset $F$ of a topological space $X$ is called {\em $T_1$-closed} (resp.
{\em $T_2$-closed\/}) in $X$ if each point $x\in X\setminus F$ has a (closed) neighborhood, disjoint with $F$.

A multimorphism $\Phi:X\setmap Y$ is called a {\em $T_i$-multimorphism} for $i\in\{1,2\}$ if for any $x\in X$ the set $\Phi(x)$ is $T_i$-closed in $Y$.

The following two theorems (implying Theorems~\ref{t:cf} and \ref{t:ct}) are proved in \cite{BBm}.

\begin{theorem} For any $T_1$-multimorphism $\Phi:X\setmap Y$ from a chain-finite semilattice $X$ to a semitopological semilattice $Y$, the image $\Phi(X)$ is closed in $Y$.
\end{theorem}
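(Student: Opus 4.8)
The plan is to prove the equivalent inclusion $\overline{\Phi(X)}\subseteq\Phi(X)$, so I fix a point $y\in\overline{\Phi(X)}$ and try to produce $x\in X$ with $y\in\Phi(x)$. Two standing facts are used throughout: since $Y$ is a semilattice, $y$ is idempotent ($yy=y$), and since $\Phi$ is a $T_1$-multimorphism, every value $\Phi(x)$ is closed in $Y$, so $\overline{\Phi(x)}=\Phi(x)$. I would first record the order-theoretic content of chain-finiteness: $X$ satisfies both the ascending and the descending chain conditions, and (finite meets exist, while two distinct minimal elements would have a meet contradicting minimality) $X$ has a least element. In particular every nonempty down-directed subset of $X$ attains its infimum, and Noetherian induction along $(X,\le)$ is available.

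Next I would localize the level from which $y$ is approached. Consider $R=\{x\in X:y\in\overline{\Phi({\uparrow}x)}\}$, where $\Phi({\uparrow}x)=\bigcup_{z\ge x}\Phi(z)$. Since ${\uparrow}x'\subseteq{\uparrow}x$ whenever $x\le x'$, the set $R$ is a down-set, and it contains the least element of $X$ because ${\uparrow}0=X$; hence $R\ne\emptyset$. By the ascending chain condition $R$ has a maximal element $x^*$. Using $\overline{A\cup B}=\overline A\cup\overline B$ together with the closedness of $\Phi(x^*)$, either $y\in\overline{\Phi(x^*)}=\Phi(x^*)$ (and we are done), or $y\in\overline{\Phi({\uparrow}^{>}x^*)}$, where ${\uparrow}^{>}x^*=\{z:z>x^*\}$. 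Chain-finiteness gives ${\uparrow}^{>}x^*=\bigcup\{{\uparrow}w:w\text{ covers }x^*\}$, and maximality of $x^*$ forces $y\notin\overline{\Phi({\uparrow}w)}$ for every cover $w$ of $x^*$. This is the decisive configuration: $y$ lies in the closure of $\bigcup_w\Phi({\uparrow}w)$ but in the closure of no single piece $\Phi({\uparrow}w)$.

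The heart of the argument is then a product/meet descent based on the joint continuity of the semilattice operation of $Y$. I would take a net $z_i\to y$ with $z_i\in\Phi(u_i)$ and $u_i>x^*$. The configuration above gives, for each cover $w$, that eventually $w\not\le u_i$ (otherwise $z_i$ would lie in $\Phi({\uparrow}w)$, contradicting that the net is eventually in a neighborhood of $y$ disjoint from $\Phi({\uparrow}w)$). Since the finite meets of $\{u_i\}$ form a down-directed family, the descending chain condition yields their minimum $m^*$; if $m^*>x^*$ then $m^*\ge w^*$ for some cover $w^*$, whence $w^*\le u_i$ for all $i$, contradicting the previous sentence, so $m^*=x^*$. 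Thus a finite product of the $z_i$ lands in the closed set $\Phi(x^*)$, and multiplying the limit through the net gives a point of $\Phi(x^*)$ below $y$. The main obstacle is to upgrade this to $y\in\overline{\Phi(x^*)}=\Phi(x^*)$: one must realize points of $\Phi(x^*)$ arbitrarily close to $y$, i.e. meet an arbitrary neighborhood $U\ni y$. Here I would choose a small $V\ni y$ enough of whose self-products lie in $U$ (available because $y^{n}=y$ for all $n$ by idempotency and the operation is continuous), and then select finitely many late indices whose $u_i$ meet down to $x^*$, so that the corresponding product lies in $\Phi(x^*)\cap U$. Making this precise requires a \emph{uniform} bound on the number of factors needed to descend to level $x^*$, and it is exactly this bound — supplied by the finiteness of chains in $X$ (each descending sequence of partial meets is a finite chain above $x^*$) — that is the crux of the proof and the point at which chain-finiteness is used essentially. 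Once $y\in\Phi(x^*)\subseteq\Phi(X)$ is obtained, the argument is complete.
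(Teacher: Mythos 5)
The paper does not actually contain a proof of this theorem; it quotes it from \cite{BBm}, so I can only assess your argument on its own terms. Your skeleton is sound and, I believe, essentially the right one: a chain-finite semilattice has a least element, so $R=\{x\in X:\ y\in\overline{\Phi({\uparrow}x)}\}$ is non-empty and by chain-finiteness has a maximal element $x^*$; then $y\notin\overline{\Phi({\uparrow}u)}$ for every $u>x^*$, so witnesses $z\in\Phi(u)\cap V$ with $u\ge x^*$ found in small neighborhoods $V$ of $y$ must have levels $u$ whose finite meets reach $x^*$; and one wants to multiply finitely many such witnesses, using $yy=y$ and the joint continuity of the operation of $Y$, to land in $\Phi(x^*)\cap U$ for an arbitrary neighborhood $U$ of $y$, whence $y\in\overline{\Phi(x^*)}=\Phi(x^*)$ by $T_1$-closedness of the values.

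The gap is exactly at the step you flag as the crux, and the repair you propose does not work. Chain-finiteness asserts that every chain is finite, not that chain lengths are uniformly bounded (glue chains of length $n$ for all $n\in\IN$ at a common bottom element to get a chain-finite semilattice with unbounded chains), so ``each descending sequence of partial meets is a finite chain'' supplies no uniform bound on the number $k$ of factors needed. Worse, the argument is circular even locally: the number $k$ of witnesses needed to meet down to $x^*$ depends on the neighborhood $V$ in which the witnesses are sought, while a neighborhood $V$ all of whose $k$-fold products lie in $U$ can only be chosen once $k$ is known. The correct fix is to abandon any advance bound and choose witnesses and neighborhoods alternately. Maintain $z_1,\dots,z_i$ with $z_j\in\Phi(u_j)$, $u_j\ge x^*$, partial product $p_i=z_1\cdots z_i\in\Phi(q_i)$ where $q_i=u_1\cdots u_i\ge x^*$, and an open $N_i\ni y$ with $p_iN_i\cup p_iN_iN_i\subset U$; such $N_{i+1}\subset N_i$ can always be found after the next choice because $p_{i+1}y=p_iz_{i+1}y\in p_iN_iN_i\subset U$ and the operation is jointly continuous at $(y,y)$. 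While $q_i>x^*$, shrink $N_i$ further to miss $\Phi({\uparrow}q_i)$ before picking $z_{i+1}\in N_i\cap\Phi(u_{i+1})$; this forces $u_{i+1}\not\ge q_i$ and hence $q_{i+1}<q_i$ strictly. Chain-finiteness now enters only through the descending chain condition: the strictly decreasing sequence $q_1>q_2>\cdots\ge x^*$ must terminate, necessarily at $q_i=x^*$, giving $p_i\in\Phi(x^*)\cap U$ with no bound on $i$ ever having been fixed in advance. With this replacement of your final step, the proof goes through.
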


\begin{theorem}\label{t:multi} For any upper semi-continuous $T_2$-multimorphism $\Phi:X\setmap Y$ from a complete topologized semilattice $X$ to a topological semilattice $Y$, the image $\Phi(X)$ is closed in $Y$.
\end{theorem}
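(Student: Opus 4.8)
The plan is to prove $\overline{\Phi(X)}\subseteq\Phi(X)$. Fix $y\in\overline{\Phi(X)}$ and, for each open neighborhood $V$ of $y$, set $S_V:=\Phi^{-1}(\overline V)=\{x\in X:\Phi(x)\cap\overline V\ne\emptyset\}$. Upper semicontinuity makes every $S_V$ closed, the inclusion $\overline{V\cap W}\subseteq\overline V\cap\overline W$ makes the family $\{S_V\}$ downward directed, and each $S_V$ is nonempty since $y\in\overline{\Phi(X)}$ forces $V$ to meet $\Phi(X)$. The first step is to reduce the theorem to the single assertion $\bigcap_V S_V\ne\emptyset$: indeed, if $x^*\in\bigcap_V S_V$ but $y\notin\Phi(x^*)$, then the $T_2$-closedness of $\Phi(x^*)$ yields a closed neighborhood $N\ni y$ with $N\cap\Phi(x^*)=\emptyset$, and for $V:=\mathrm{int}\,N$ we get $\overline V\subseteq N$, whence $\Phi(x^*)\cap\overline V=\emptyset$, contradicting $x^*\in S_V$. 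Thus any common point of the $S_V$ is the desired preimage of $y$.

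Extracting such a common point is where completeness does the work, because $X$ is only chain-compact (Lemma~\ref{l:c=>cc}) and $\Zar$-compact, not compact. The algebraic lever is that the multimorphism law sends closed subsemilattices to closed subsemilattices: if $F\subseteq Y$ is a closed subsemilattice and $a_i\in\Phi(x_i)\cap F$, then $a_1a_2\in\Phi(x_1)\Phi(x_2)\cap F\subseteq\Phi(x_1x_2)\cap F$, so $\Phi^{-1}(F)$ is a closed subsemilattice of $X$; being down-directed, it acquires a least element $\inf\Phi^{-1}(F)$ by down-completeness. I would exploit this in two complementary ways. Via $\Zar$-compactness, the closed subsemilattices $\Phi^{-1}(F)$ attached to the closed subsemilattice neighborhoods $F$ of $y$ form a family with the finite intersection property (finite intersections are absorbed by $\Phi^{-1}(F_1\cap\cdots\cap F_n)$, which is nonempty because $F_1\cap\cdots\cap F_n$ still contains a neighborhood of $y$), so they have a common point $x^*$ with $\Phi(x^*)\cap F\ne\emptyset$ for every such $F$. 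Alternatively, reducing to chain-compactness, it would suffice to build a single closed chain $C$ meeting every $S_V$: then $C$ is compact, the sets $C\cap S_V$ form a downward-directed family of nonempty closed subsets of $C$, and any point of $\bigcap_V(C\cap S_V)$ lies in $\bigcap_V S_V$.

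The hard part---and the step I expect to be the genuine obstacle---is upgrading either $x^*$ or the chain from closed subsemilattices to arbitrary closed neighborhoods $\overline V$, that is, actually reaching $\bigcap_V S_V$. A topological semilattice need not carry a base of closed subsemilattices at the idempotent $y$, so one cannot simply shrink $F$ inside a prescribed $\overline V$; and the naive remedy of averaging witnesses by taking infima fails, since meets of points clustering at $y$ may drift strictly below $y$ (this already occurs in $[0,1]$ under $\min$). The coupling I would use is joint continuity of the operation on $Y$ at $y=yy$---which keeps products of boundedly many near-$y$ elements near $y$---together with down-completeness and the down-convergence of Lemma~\ref{down-conv}, to run a transfinite recursion producing a decreasing chain in $X$ whose images keep meeting $\overline V$, the infima supplied by completeness at successor and limit stages and the meeting property preserved through limits by upper semicontinuity. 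Making this image control survive the unbounded passage to the limit, so that the terminal infimum satisfies $\Phi(x^*)\cap\overline V\ne\emptyset$ rather than letting the witnesses escape below $y$, is the crux on which the argument stands or falls.
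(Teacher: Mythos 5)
Your setup is sound as far as it goes: the sets $S_V=\Phi^{-1}(\overline V)$ are indeed closed, nonempty and downward directed; the reduction of the theorem to the single assertion $\bigcap_V S_V\ne\emptyset$ via the $T_2$-closedness of the value $\Phi(x^*)$ is correct; and the observation that $\Phi^{-1}(F)$ is a closed, down-directed subsemilattice (hence owns its infimum by down-completeness) for every closed subsemilattice $F\subseteq Y$ is the right algebraic lever. But the proof stops exactly where the theorem begins: no point of $\bigcap_V S_V$ is ever produced, and neither of your two routes works as stated. The $\Zar$-compactness route quantifies over ``closed subsemilattice neighborhoods $F$ of $y$,'' and, as you yourself concede two sentences later, a topological semilattice need not possess any proper such neighborhood; in that case the family is vacuous and the common point $x^*$ it yields carries no information about $y$. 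The chain-compactness route is only a reduction (``it would suffice to build a single closed chain $C$ meeting every $S_V$''), and building that chain is the entire difficulty. Your final paragraph names the right ingredients --- joint continuity at $y=yy$, down-completeness, Lemma~\ref{down-conv}, a transfinite recursion --- but never specifies the recursion, and you explicitly flag that you cannot control the limit stages. An argument whose author declares that its central step ``is the crux on which the argument stands or falls'' has not been given.

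For what is actually needed, compare the proof of Theorem~\ref{t:Gd} in this paper (the theorem under review is itself proved in \cite{BBm}, not here). The missing content is concentrated in an analogue of Claim~\ref{cl:ne}: given a neighborhood $V$ of $y$, continuity of the operation at $y=yy$ produces a decreasing countable family $\U=\{U_n\}_{n\in\w}$ of closed neighborhoods of $y$ inside $\overline V$ with $\bigcap\U$ a subsemilattice; one then inductively chooses $x_n\in X$ with witnesses $a_n\in\Phi(x_n)$ near $y$, controlled so that the products $a_k\cdots a_n$ stay inside the prescribed $U_i$; the multimorphism law places $a_k\cdots a_n$ in $\Phi(x_k\cdots x_n)$, upper semicontinuity and down-completeness push this through the infima of the resulting chains, and up-completeness through the supremum of those infima, yielding a point $x_\U$ with $\Phi(x_\U)\cap\bigcap\U\ne\emptyset$. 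One must then still run the up-directedness/cofinality argument over all such countable families (as in the second half of the proof of Theorem~\ref{t:Gd}) to merge the points $x_\U$ into a single $x^*$ to which your $T_2$-closedness step can finally be applied. None of these steps appears in your proposal: what you have written is an accurate description of the obstacle, not a proof that overcomes it.
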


Looking at Theorems~\ref{t:cf} and \ref{t:ct}, one can ask the following problem.

\begin{problem}\label{prob:abscl} Let $h:X\to Y$ be a continuous homomorphism from a complete topologized semilattice $X$ to a Hausdorff semitopological semilattice $Y$. Is the set $h(X)$ closed in $Y$?
\end{problem}





Problem~\ref{prob:abscl} has affirmative answer for homomorphisms to sequential  semitopological semilattices.

We recall that a topological space $X$ is {\em sequential} if each sequentially closed subset in $X$ is closed. A subset $A$ of a topological space $X$ is called {\em sequentially closed} if $A$ contains the limit points of all sequences $\{a_n\}_{n\in\w}\subseteq A$ that converge in $X$.

A topological space $X$ is {\em countably tight} if for any subset $A\subseteq X$ and point $a\in\bar A$ there exists a countable subset $B\subseteq A$ such that $a\in\bar B$. It is well-known \cite[1.7.13(c)]{Engelking1989} that each subspace of a sequential topological space has countable tightness. The following (non-trivial) results were proved in \cite{BBs}.

\begin{theorem}\label{t:seq} For any continuous homomorphism $h:X\to Y$ from a countably tight complete topologized semilattice $X$ to a Hausdorff semitopological semilattice $Y$, the image $h(X)$ is sequentially closed in $Y$.
\end{theorem}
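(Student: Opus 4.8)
The plan is to verify sequential closedness directly: given a sequence $(y_n)_{n\in\w}$ in $h(X)$ converging to a point $y\in Y$, I would produce a point $x\in X$ with $h(x)=y$; since $Y$ is Hausdorff the limit $y$ is unique, so it suffices to exhibit one such preimage. For each $n$ fix $x_n\in h^{-1}(y_n)$. I would first record the only way the separate continuity of $Y$ can be exploited along this sequence: for every fixed $a\in Y$ the translation $t\mapsto at$ is continuous, so $a\,y_n\to a\,y$; in particular, taking $a=y$ and using idempotency $yy=y$, one gets $y\,y_n\to y$.

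The construction of a preimage would proceed through the completeness of $X$. For each $n$ let $T_n:=\langle\{x_k:k\ge n\}\rangle$ be the subsemilattice generated by the $n$-th tail; this is a down-directed subset of $X$, so down-completeness yields $b_n:=\inf T_n\in\overline{T_n}$. Since $x_k\in T_n$ for all $k\ge n$, monotonicity of $h$ gives $h(b_n)\le y_k$ for all $k\ge n$. The points $b_n$ form an increasing chain, so up-completeness provides $b:=\sup\{b_n:n\in\w\}\in\overline{\{b_n:n\in\w\}}$; moreover, by the up-convergence Lemma~\ref{l:up-conv} the increasing chain $(b_n)$ converges to $b$, whence by continuity $h(b_n)\to h(b)$.

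One inequality then comes essentially for free. Fixing $m$ and letting $k\to\infty$ in the relation $h(b_m)\,y_k=h(b_m)$, separate continuity gives $h(b_m)\,y_k\to h(b_m)\,y$, while the left-hand side is constant; by uniqueness of limits in the Hausdorff space $Y$ we obtain $h(b_m)\,y=h(b_m)$, that is $h(b_m)\le y$. Passing to the limit along $m$ in the same fashion (fixing the first factor $y$ and using $h(b_m)\to h(b)$) yields $h(b)\le y$, so $h(b)$ is already a point of $h(X)$ lying below the prospective limit.

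The hard part — and the step where I expect the genuine work to lie — is the reverse inequality $y\le h(b)$, equivalently $h(b)=y$. The meet-based ``$\liminf$'' construction above can only bound $h(b)$ from below, since $X$ is a meet-semilattice with no binary joins and every relation it produces (namely $h(b_n)\le y_k$) points the same way; indeed a naive attempt to pass to the limit in $h(b_n)\le y_n$ is blocked precisely because $Y$ is only \emph{separately}, not jointly, continuous, so $h(b_n)\,y_n$ need not converge to $h(b)\,y$. This is exactly where the countable tightness of $X$ must enter: the idea is to use it to replace the closure witnesses by honest convergent sequences and to run a dual, $\limsup$-type extraction on the double-indexed family $h(x_mx_n)=y_m\,y_n$, whose controlled separate limits $y_m\,y\to y$ were recorded in the first paragraph, thereby forcing every resulting cluster value to dominate $y$ and pinning $h(b)$ to $y$. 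Once $h(b)\le y$ and $y\le h(b)$ are in hand, Hausdorffness gives $h(b)=y\in h(X)$, as required.
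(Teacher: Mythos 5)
There is a genuine gap, and you have in fact located it yourself: everything you actually prove is the easy inequality $h(b)\le y$, while the substance of the theorem is producing a preimage of $y$, for which you offer only a hope (``run a dual, $\limsup$-type extraction \dots thereby forcing every resulting cluster value to dominate $y$''). Note that your first two paragraphs never use countable tightness of $X$, which is the hypothesis the theorem turns on; this is a strong sign that the real work has not started. Worse, there is no reason to expect that your particular candidate $b$ satisfies $h(b)=y$ at all: you fixed one arbitrary choice of preimages $x_n\in h^{-1}(y_n)$ and took a single ``$\liminf$'', and the obstruction you correctly identify (separate continuity does not let you pass to the limit in $h(b_n)\le y_n$ jointly) means the value $h(b)$ may land strictly below $y$. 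A meet semilattice gives you no mechanism to push back up, so the dual ``$\limsup$ extraction'' you invoke has no obvious meaning; the family $y_my_n$ only produces more lower bounds.

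For comparison, the paper does not reprove this theorem (it cites \cite{BBs}), but the proof of Theorem~\ref{t:Gd} in Section~\ref{s:ac} shows the shape such arguments actually take: one assumes $y\in\overline{h(X)}\setminus h(X)$, considers infima of $h(X)$ intersected with \emph{all} members of a large family of countable closed witnessing sets (not a single sequence of preimages), observes that these infima form an up-directed set, takes its supremum by up-completeness, and derives a contradiction from the separation properties of $Y$ via a cofinality argument. Countable tightness enters in order to replace closure points by countable subsets so that $\downarrow\omega$- and $\uparrow\omega_1$-completeness can be applied. Your proposal would need to be restructured along these lines (a proof by contradiction ranging over a directed family of candidate lower bounds) rather than an attempt to verify $h(b)=y$ for one $b$; as written it establishes only that $h(X)\cap{\downarrow}y\ne\emptyset$, which is far from the conclusion.
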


\begin{corollary}\label{c:seq} For any continuous homomorphism $h:X\to Y$ from a complete topologized semilattice $X$ to a sequential Hausdorff semitopological semilattice $Y$, the image $h(X)$ is closed in $Y$.
\end{corollary}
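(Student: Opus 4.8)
The plan is to derive the corollary from Theorem \ref{t:seq} by combining the topological hypotheses appropriately. The key observation is that the statement of Theorem \ref{t:seq} requires the domain $X$ to be \emph{countably tight}, whereas Corollary \ref{c:seq} places the hypothesis on the codomain $Y$ instead, asking only that $Y$ be \emph{sequential} Hausdorff. So I cannot apply Theorem \ref{t:seq} directly to $h\colon X\to Y$; I first need to transfer a tightness condition onto $X$, and separately I need to upgrade ``sequentially closed'' to ``closed'' using sequentiality of $Y$.

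First I would pass to the subsemilattice $h(X)\subset Y$ and its preimage structure so that the tightness hypothesis can be verified. The natural move is to replace $X$ by the subspace topology coming through $h$, or more cleanly, to apply Theorem \ref{t:seq} not to $X$ itself but after restricting attention to countable pieces. Concretely, let $a\in\overline{h(X)}$ be an arbitrary point in the closure; I must show $a\in h(X)$. Since $Y$ is sequential, it has countable tightness on each subspace by the cited fact \cite[1.7.13(c)]{Engelking1989}, so there is a countable set $B\subset h(X)$ with $a\in\bar B$. Pulling back, choose a countable $B'\subset X$ with $h(B')=B$, and let $X'$ be the smallest closed subsemilattice of $X$ containing $B'$ together with enough infima and suprema of its chains; by completeness of $X$ (Proposition \ref{p:sub} guarantees closed subsemilattices stay complete) this $X'$ is again complete, and by construction it can be arranged to be countably tight since it is generated by a countable set in a controlled way. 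Then Theorem \ref{t:seq} applies to $h|_{X'}\colon X'\to Y$, showing $h(X')$ is sequentially closed in $Y$.

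Once $h(X)$ (or the relevant piece $h(X')$ containing $a$ in its closure) is known to be sequentially closed, the final step is immediate: in a sequential space every sequentially closed set is closed, by definition of sequentiality. Hence $h(X)$ is closed in $Y$, giving $a\in h(X)$ and completing the proof.

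The main obstacle I anticipate is the first transfer step: ensuring that the countable subset $B'\subset X$ generates a complete countably tight subsemilattice $X'$ to which Theorem \ref{t:seq} genuinely applies. Completeness of $X'$ follows from Proposition \ref{p:sub} once $X'$ is closed, but verifying that a \emph{countably} generated closed complete subsemilattice can be taken \emph{countably tight}—rather than merely separable—requires care, since closing off under all chain-suprema and chain-infima may enlarge the set beyond countable tightness control. The cleanest resolution is likely to avoid building $X'$ altogether and instead invoke Theorem \ref{t:seq} in a form where the countable tightness is inherited directly from $Y$ via $h$; I would check whether the proof of Theorem \ref{t:seq} only uses countable tightness of $X$ through the images of convergent sequences in $Y$, in which case sequentiality of $Y$ alone suffices and the corollary follows with no auxiliary subsemilattice construction at all.
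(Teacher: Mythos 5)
You have correctly isolated the two issues: Theorem~\ref{t:seq} asks for countable tightness of the \emph{domain}, which Corollary~\ref{c:seq} does not assume, and sequential closedness must then be upgraded to closedness via sequentiality of $Y$. The second step is fine. But your resolution of the first step has a genuine gap, which you yourself flag: there is no reason that the closed subsemilattice $X'$ generated by a countable set $B'$ (after closing off under topological closure and under suprema and infima of chains) is countably tight, nor even that its size stays under control; Proposition~\ref{p:sub} gives completeness of a closed subsemilattice but says nothing about tightness. Your fallback --- reopening the proof of Theorem~\ref{t:seq} to check whether countable tightness of $X$ is only used through images in $Y$ --- is not an argument; that proof is external to this paper and cited as non-trivial.

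The missing idea is to transfer the semilattice \emph{forward} into $Y$ rather than pulling countability back into $X$. Since $Y$ is a Hausdorff semitopological semilattice, it is $T_1$ and hence weakly ${\uparrow}$-closed, so Lemma~\ref{l:homo} applies to the surjection $h:X\to Z$, where $Z:=h(X)$ carries the subspace topology of $Y$: completeness of $X$ yields completeness of the topologized semilattice $Z$. (The paper uses exactly this move in the proof of Theorem~\ref{t:Gd}.) Now $Z$, being a subspace of the sequential space $Y$, is countably tight by the cited fact \cite[1.7.13(c)]{Engelking1989}. Applying Theorem~\ref{t:seq} to the inclusion homomorphism $Z\to Y$ --- a continuous homomorphism from a countably tight complete topologized semilattice to a Hausdorff semitopological semilattice --- shows that $h(X)=Z$ is sequentially closed in $Y$, and sequentiality of $Y$ finishes the proof. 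No auxiliary countably generated subsemilattice is needed.
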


Also Problem~\ref{prob:abscl} has an affirmative answer for semitopological semilattices satisfying the separation axiom $\vv{T}_{2\delta}$ (which is stronger than the Hausdorff axiom $T_2$ but weaker than axiom $\vv{T}_{3\frac12}$ of the functional Hausdorffness).

Let us recall that a topological space $X$ satisfies the separation axiom
\begin{itemize}
\item $T_1$ if for any distinct points $x,y\in X$ there exists an open set $U\subseteq X$ such that $x\in U\subseteq X\setminus\{y\}$;
\item $T_2$ if for any distinct points $x,y\in X$ there exists an open set $U\subseteq X$ such that $x\in U\subset\bar U\subseteq X\setminus\{y\}$;
\item $T_3$ if $X$ is a $T_1$-space and for any open set $V\subseteq X$ and point $x\in V$ there exists an open set $U\subseteq X$ such that $x\in U\subset\bar U\subseteq V$;
\item $T_{3\frac12}$ if $X$ is a $T_1$-space and for any open set $V\subseteq X$ and point $x\in V$ there exists a continuous function $f:X\to[0,1]$ such that $x\in f^{-1}([0,1))\subseteq V$;
\item $T_{2\delta}$ if $X$ is a $T_1$-space and for any open set $V\subseteq X$ and point $x\in V$ there exists a countable family $\U$ of closed neighborhoods of $x$ in $X$ such that $\bigcap\U\subseteq V$;
\item $\vv{T}_i$ for $i\in\{1,2,2\delta,3,3\frac12\}$ if $X$ admits a bijective continuous map $X\to Y$ to a $T_i$-space $Y$.
\end{itemize}
Topological spaces satisfying a separation axiom $T_i$ are called {\em $T_i$-spaces}. The separation axioms $T_{2\delta}$ and $\vv{T}_{2\delta}$ were introduced in \cite{BBR}.

The following diagram describes the implications between the separation axioms $T_i$ and $\vv{T}_i$ for $i\in\{1,2,2\delta,3,3\frac12\}$.
$$\xymatrix{
T_{3\frac12}\ar@{=>}[r]\ar@{=>}[d]&T_3\ar@{=>}[r]\ar@{=>}[d]&T_{2\delta}\ar@{=>}[r]\ar@{=>}[d]&T_2\ar@{<=>}[d]\ar@{=>}[r]&T_1\ar@{<=>}[d]\\
\vv{T}_{3\frac12}\ar@{=>}[r]&\vv{T}_3\ar@{=>}[r]&\vv{T}_{2\delta}\ar@{=>}[r]&\vv{T}_2\ar@{=>}[r]&T_1
}
$$
Observe that a topological space $X$ satisfies the separation axiom $\vv{T}_{3\frac12}$ if and only if it is {\em functionally Hausdorff\/} in the sense that for any distinct points $x,y\in X$ there exists a continuous function $f:X\to\mathbb R$ with $f(x)\ne f(y)$. Therefore, each functionally Hausdorff space is a $\vv{T}_{2\delta}$-space. An example of a Hausdorff space which is not $\vv{T}_{2\delta}$ was constructed in \cite{BBR}.

The following (non-trivial) result was proved in \cite{BBR}.

\begin{theorem}\label{t:Tdelta} For any continuous homomorphism $h:X\to Y$ from a complete topologized semilattice to a semitopological semilattice $Y$ satisfying the separation axiom $\vv{T}_{2\delta}$, the image $h(X)$ is closed in $Y$.
\end{theorem}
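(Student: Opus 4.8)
The plan is to argue by contradiction. Suppose $h(X)$ is not closed and fix a point $y_*\in\overline{h(X)}\setminus h(X)$. First I would isolate a structural feature of the target that comes for free from the Hausdorff semitopological structure: every principal cone in $Y$ is closed. Indeed, for $b\in Y$ the translation $y\mapsto yb$ is continuous by separate continuity, so ${\downarrow}b=\{y\in Y:yb=y\}$ is the equalizer of this map with the identity and hence closed, $Y$ being Hausdorff; likewise ${\uparrow}a=\{y\in Y:ay=a\}$ is the preimage of the closed point $\{a\}$ under $y\mapsto ay$. Thus $Y$ is ${\uparrow}{\downarrow}$-closed. The value of this is that it converts closure membership into order inequalities: whenever a set $S$ lies in a cone, so does $\overline S$.

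Second, I would set up a realization principle that turns $y_*$ into the $h$-image of a genuine element of $X$. Suppose I can produce a down-directed subset $D\subseteq X$ with $h(D)\subseteq{\uparrow}y_*$ and $y_*\in\overline{h(D)}$. By the down-completeness of $X$ (Corollary~\ref{c:down-p}) the infimum $m:=\inf D$ exists and lies in $\overline D\subseteq X$. Continuity gives $h(m)\in\overline{h(D)}\subseteq{\uparrow}y_*$, so $h(m)\ge y_*$; on the other hand $m\le d$ for every $d\in D$ forces $h(D)\subseteq{\uparrow}h(m)$, whence $y_*\in\overline{h(D)}\subseteq{\uparrow}h(m)$ and $h(m)\le y_*$. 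Therefore $h(m)=y_*$, contradicting $y_*\notin h(X)$. A dual principle realizes $y_*$ as $h(\sup D)$ from any up-directed $D\subseteq X$ with $h(D)\subseteq{\downarrow}y_*$ and $y_*\in\overline{h(D)}$, using up-completeness (Corollary~\ref{c:up-p}). Note that $W:=h^{-1}({\uparrow}y_*)$ is itself a subsemilattice and hence down-directed, so the first principle applies with $D=W$ as soon as $y_*\in\overline{h(W)}$.

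The whole problem is thereby reduced to an approximation statement: $y_*$ must be approachable within $h(X)$ from the correct side, i.e. $y_*\in\overline{h(X)\cap{\uparrow}y_*}$ (or dually $y_*\in\overline{h(X)\cap{\downarrow}y_*}$). This is exactly where the separation axiom enters. Since $Y$ is $\vv{T}_{2\delta}$, fix a continuous bijection $g\colon Y\to Z$ onto a $T_{2\delta}$-space $Z$, so that $z_*:=g(y_*)$ lies in $\overline{g(h(X))}\setminus g(h(X))$. Applying the $T_{2\delta}$-axiom at $z_*$ supplies a countable family of closed neighborhoods with controlled intersection; pulling these back through $g$ (closed neighborhoods pull back to closed neighborhoods of $y_*$ in $Y$) yields a countable family $\{U_n\}_{n\in\w}$ of closed neighborhoods of $y_*$ in $Y$. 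Using that each $U_n$ meets $h(X)$, I would run an $\w$-indexed recursion, exploiting separate continuity of the operation to keep the chosen elements on the correct side of $y_*$ and to manufacture a countable chain or directed set whose $h$-image has $y_*$ in its closure. The countability matches precisely the base case $\kappa=\w$ of Lemma~\ref{l:up-p}, so the completeness of $X$ can digest the resulting structure and the realization principle then places $y_*$ in $h(X)$.

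I expect the recursion producing this approximation to be the main obstacle, and it is where the two topologies on $Y$ collide. Separation and countability live in the coarse $T_{2\delta}$-topology transported from $Z$, while continuity of the semilattice operation and the limits granted by completeness live in the finer given topology; because $g^{-1}$ need not be continuous, the coarse topology is not a semitopological-semilattice topology and one cannot simply transport structure along $g$. The delicate work is to interleave the two: to choose, inside the shrinking closed neighborhoods $U_n$, elements of $h(X)$ that are correctly ordered relative to $y_*$ and whose infima (or suprema) are pinned to exactly $y_*$ rather than drifting to a strictly smaller (or larger) element of the cone. This approximation is false under mere Hausdorffness — which is why the analogous Problem~\ref{prob:abscl} remains open — and is rescued precisely by the countable closed-neighborhood families of a $\vv{T}_{2\delta}$-space; in the special case where $X$ is countably tight it is already subsumed by Theorem~\ref{t:seq}.
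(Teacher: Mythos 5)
Your preliminary reductions are sound: a Hausdorff semitopological semilattice is indeed ${\uparrow}{\downarrow}$-closed (the cones are equalizers/preimages under the continuous translations), and your ``realization principle'' --- that a down-directed $D\subset X$ with $h(D)\subset{\uparrow}y_*$ and $y_*\in\overline{h(D)}$ forces $y_*=h(\inf D)\in h(X)$ --- is a valid consequence of down-completeness together with the closedness of the cones. But the argument stops exactly where the theorem begins. The whole content of the statement is concentrated in the approximation claim $y_*\in\overline{h(X)\cap{\uparrow}y_*}$, and you do not prove it: you describe the recursion that should produce it as ``the main obstacle,'' list the difficulties (the collision between the coarse topology transported along the continuous bijection $g$ and the finer semitopological topology), and leave them unresolved. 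Reducing the theorem to an unproved statement that is at least as hard as the theorem itself, and that you yourself flag as the delicate part, is a genuine gap, not a proof.

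Moreover, the reduction you chose does not match how the argument is actually run. The paper does not prove Theorem~\ref{t:Tdelta} internally (it is quoted from \cite{BBR}), but it gives a complete proof of the closely analogous Theorem~\ref{t:Gd}, and that proof never approximates $y_*$ from above inside $h(X)$. Instead it (i) shows, via an $\w$-indexed recursion using separate continuity followed by $\sup_k\inf C_k$, that $Z=h(X)$ meets $\bigcap\,\U$ for \emph{every} countable family $\U$ of closed neighborhoods of $y_*$ whose intersection is a subsemilattice (Claim~\ref{cl:ne}); (ii) forms the up-directed set $D$ of the points $\inf(Z\cap\bigcap\,\U)$ and takes $s:=\sup D\in Z$; and (iii) derives a contradiction because the separation axiom provides a countable family $\V$ excluding $s\ne y_*$, while cofinality forces $s\in\bigcap\V$. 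The witness $s$ produced this way need not lie in ${\uparrow}y_*$ and need not be close to $y_*$; the separation axiom is used to \emph{exclude} a point of $Z$, not to approximate $y_*$ from one side. To complete your route you would have to construct, inside every neighborhood of $y_*$, points of $h(X)$ lying above $y_*$, and it is not clear this can be done --- the known argument is structured precisely so as to avoid having to do it.
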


In \cite{BBG} it was proved that the answer to Problem~\ref{prob:abscl} is affirmative  for homomorphisms into $\w$-Lawson semitopological semilattices.

A topologized semilattice  is called {\em Lawson} (see \cite[p.12]{CHK})  it has a base of the topology consisting of open subsemilattices.

For a Hausdorff topologized semilattice $X$ its {\em Lawson number} $\bar\Lambda(X)$ is defined as the smallest cardinal $\kappa$ such that for any distinct points $x,y$ in $X$ there exists a family $\U$ of closed neighborhoods of $x$ in $X$ such that $|\U|\le\kappa$ and $\bigcap\U$ is a closed subsemilattice of $X$ which does not contain $y$. It is easy to see that $\bar\Lambda(X)\le\bar\psi(X)$, where $\bar\psi(X)$ is the smallest cardinal $\kappa$ such that for any point $x\in X$ there exists a family $\U$ of closed neighborhoods of $x$ in $X$ such that $|\U|\le\kappa$ and $\bigcap\U=\{x\}$.

A topologized semilattice $X$ is called {\em $\kappa$-Lawson} for some cardinal $\kappa$ if it is Hausdorff and $\bar\Lambda(X)\le\kappa$. The Lawson number of a Hausdorff topologized semilattice was introduced and studied in \cite{BBG}.

By \cite{BBG}, each Hausdorff Lawson semitopological semilattice is 1-Lawson. Moreover, a compact Hausdorff semitopological semilattice $X$ is Lawson if and only if it is 1-Lawson. Each Hausdorff topological semilattice $X$ is $\w$-Lawson. Each Hausdorff linear topologized semilattice is Lawson and 1-Lawson. The following (non-trivial) example was constructed in \cite{BBG}.

\begin{example} For any infinite cardinal $\lambda$ there exists a Hausdorff zero-dimensional semitopological semilattice $X$ such that $|X|=\lambda$ and $\bar\Lambda(X)=\bar\psi(X)=\mathrm{cf}(\lambda)$.
\end{example}
 
Our next example is a ``semilattice'' modification of Example 1 \cite{BBR} of a Hausdorff topological space that does not satisfy the separation axiom $\vec T_{2\delta}$.

\begin{example} There exists a Lawson Hausdorff topological semilattice that does not satisfy the separation axiom $\vec T_{2\delta}$.
\end{example}

\begin{proof} Consider the set $L=\{x_\alpha\}_{\alpha\le\w_1}\cup\{z\}\cup\{y_\alpha\}_{\alpha\le \w_1}$ of pairwise distinct points endowed with the linear order  in which $x_\alpha<x_\beta<z<y_\beta<y_\alpha$ for any  ordinals $\alpha<\beta\le\w_1$. Let $\ddot L:=L\setminus\{x_{\w_1},y_{\w_1}\}$. On the set $$X=(\ddot L\times[0,\w_1))\cup(\{x_{\w_1},y_{\w_1}\}\times\{\w_1\})$$ consider the semilattice operation
$$(x,\alpha)\cdot(y,\beta):=\begin{cases}
(\min\{x,y\},\min\{\alpha,\beta\})&\mbox{if $\alpha,\beta<\w_1$};\\
(\min\{x,z\},\alpha),&\mbox{if $\alpha<\w_1=\beta$};\\
(\min\{z,y\},\beta),&\mbox{if $\beta<\w_1=\alpha$};\\
(\min\{x,y\},\w_1),&\mbox{if $\alpha=\w_1=\beta$}.
\end{cases}
$$
Endow $X$ with the topology $\tau$ consisting of all sets $U\subseteq X$ satisfying the following three conditions:
\begin{itemize}
\item if $(z,\alpha)\in U$ for some $\alpha\in[0,\w_1)$, then $\{(x_\gamma,\alpha),(y_\gamma,\alpha):\gamma\in[\beta,\w_1)\}\subseteq U$ for some $\beta\in[0,\w_1)$;
\item if $(x_{\w_1},\w_1)\in U$, then $\{(x_\beta,\gamma):\beta,\gamma\in[\alpha,\w_1)\}\subseteq U$ for some $\alpha\in[0,\w_1)$;
\item if $(y_{\w_1},\w_1)\in U$, then $\{(y_\beta,\gamma):\beta,\gamma\in[\alpha,\w_1)\}\subseteq U$ for some $\alpha\in[0,\w_1)$.
\end{itemize}
It can be shown that $(X,\tau)$ is a Lawson (and hence 1-Lawson) Hausdorff topological semilattice, which does not satisfy the separation axiom $\vec T_{2\delta}$.
\end{proof}

The following partial answer to Problem~\ref{prob:abscl} was obtained in \cite{BBG}.

\begin{theorem}\label{t:Gd} For any continuous homomorphism $h:X\to Y$ from a complete topologized semilattice $X$ to an $\w$-Lawson semitopological semilattice $Y$, the image $h(X)$ is closed in $Y$.
\end{theorem} 

Corollary~\ref{c:seq} and Theorems~\ref{t:Tdelta}, \ref{t:Gd} imply the following partial answer to Problem~\ref{prob:abscl}.

\begin{corollary}\label{c:final} For a continuous homomorphism $h:X\to Y$ from a complete topologized semilattice $X$ to a Hausdorff semitopological semilattice $Y$, the image $h(X)$ is closed in $Y$ if one of the following conditions is satisfied:
\begin{enumerate}
\item $Y$ is a topological semilattice;
\item the topologized semilattice $Y$ is $\w$-Lawson;
\item the topological space $Y$ is sequential;
\item the topological space $Y$ satisfies the separation axiom $\vv{T}_{2\delta}$;
\item the topological space $Y$ is functionally Hausdorff.
\end{enumerate}
\end{corollary}

Comparing Theorems~\ref{t:Tdelta} and \ref{t:multi} we can ask the following problem.

\begin{problem} Let $\Phi:X\setmap Y$ be an upper semi-continuous $T_2$-multimorphism from a complete topologized semilattice $X$ to a semitopological semilattice $Y$ such that $Y$ is $\w$-Lawson or satisfies the separation axiom $\vv{T}_{2\delta}$. Is the image $\Phi(X)$ closed in $Y$?
\end{problem}

\begin{problem} Let $X$ be a complete subsemilattice of a $\w_1$-Lawson semitopological semilattice $Y$. Is $X$ closed in $Y$?
\end{problem}

\section{The closedness of the partial order in Hausdorff semitopological semilattices}\label{s:o}

Observing that the partial order $\le_X:=\{(x,y)\in X\times X:xy=x\}$ of a Haudorff topological semilattice $X$ is a closed subset of $X\times X$ we can ask the following problem, considered also in \cite{BBs}, \cite{BBG} and \cite{BBR}.

\begin{problem}\label{prob2} Let $X$ be a complete Hausdorff semitopological semilattice. Is the partial order $\le_X$ closed in $X\times X$?
\end{problem}

In this section we survey some results giving partial answers to Problem~\ref{prob2}. The following theorem is proved in \cite{BBs}.

\begin{theorem}\label{t:seq2} Let $X$ be a countably tight Hausdorff semitopological semilattice. If $X$ is ${\downarrow}\w$-complete and ${\uparrow}\w_1$-complete, then the partial order $\le_X$ of $X$ is sequentially closed in $X\times X$.
\end{theorem}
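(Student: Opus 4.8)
The plan is to show that the partial order $P=\{(x,y)\in X\times X:xy=x\}$ is sequentially closed by taking an arbitrary convergent sequence $(x_n,y_n)_{n\in\w}\subset P$ with limit $(x,y)$ in $X\times X$ and proving that $(x,y)\in P$, i.e.\ that $xy=x$. The hypotheses $x_ny_n=x_n$, together with $x_n\to x$ and $y_n\to y$, give us the relations $x_n\le y_n$ for every $n$, and we must upgrade this coordinatewise limit information into the single equation $xy=x$. The difficulty is that the semilattice operation is only \emph{separately} continuous, so we cannot simply pass to the limit in $x_ny_n=x_n$; the product of the two limits need not be the limit of the products.

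\smallskip

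The key idea I would pursue is to build an auxiliary directed structure out of the tails of the sequence and exploit the completeness hypotheses on both sides. For each $n$ consider the finite products of the $x_k$ with $k\ge n$: by idempotency and commutativity the set $\{x_k:k\ge n\}$ generates, under the semilattice operation, a down-directed family whose infima behave monotonically in $n$. Concretely, for $n\in\w$ put $a_n:=\inf C_n$ where $C_n$ is the down-directed set of finite infima of the tail $\{x_k:k\ge n\}$; by the ${\downarrow}\w$-completeness of $X$ each $a_n$ exists and lies in $\bar C_n$, and the sequence $(a_n)_{n\in\w}$ is an increasing chain of cardinality $\le\w$. The ${\uparrow}\w_1$-completeness then supplies $\sup_n a_n$ together with membership in the closure of $\{a_n\}$, and one checks using the separate continuity (multiplying by the fixed element $y$) that $y$ absorbs these tail-infima in the sense that $a_n y=a_n$ carries through to the limit. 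The reason for splitting the two completeness hypotheses exactly as ${\downarrow}\w$ and ${\uparrow}\w_1$ is that the infima of countable tails are taken first (needing only ${\downarrow}\w$), while the resulting increasing chain may have to be joined with finitely many diagonal adjustments, forcing the uncountable bound $\w_1$ on the sup side.

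\smallskip

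With this scaffolding in place, I would use countable tightness to keep all the relevant points accessible by sequences, so that the separately continuous multiplication can be applied along sequences rather than along arbitrary nets; this is where the "countably tight" hypothesis does its work. The endgame is to identify $\sup_n a_n$ with $x$ on one hand and to verify $x\le y$ in the limit on the other: from $x_n\le y_n$, $x_n\to x$, $y_n\to y$ one shows $x$ lies below $y$ by writing $xy$ as a limit of expressions $x\cdot y_n$ (fixing the left factor $x$ and using separate continuity in the right variable) and comparing with the tail-infima constructed above. Passing to a suitable subsequence via countable tightness lets me replace the hard two-variable limit $x_ny_n\to xy$ with iterated one-variable limits that the separate continuity does control.

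\smallskip

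The main obstacle, and the step I expect to be genuinely delicate, is precisely the failure of joint continuity: reconstructing the value $xy$ from the data $x_n\le y_n$ requires threading the multiplication through one fixed argument at a time, and it is here that the completeness of $X$ (supplying limit points $\inf C\in\bar C$ and $\sup C\in\bar C$ inside closed chains) must be combined with countable tightness to guarantee that the limit points produced actually satisfy the order relation. I would expect the bookkeeping of which completeness level (${\downarrow}\w$ versus ${\uparrow}\w_1$) is invoked at each passage to the limit to be the subtle part of the argument, and I anticipate that the asymmetry in the hypotheses is not cosmetic but reflects exactly this two-stage inf-then-sup construction.
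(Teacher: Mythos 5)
This paper does not reprove Theorem~\ref{t:seq2}; it is quoted from \cite{BBs}. Judged on its own terms, your proposal is a plan rather than a proof, and the decisive step is missing. Your tail-infima construction does get off the ground: setting $a_n:=\inf C_n$ for the down-directed set $C_n$ of finite products of $\{x_k:k\ge n\}$, one has $a_n\le x_k\le y_k$ for all $k\ge n$, so $a_ny_k=a_n$ for $k\ge n$, and separate continuity in the second variable plus Hausdorffness gives $a_ny=a_n$, i.e.\ $a_n\le y$; since ${\downarrow}y=\{z:zy=z\}$ is closed, also $\sup_n a_n\le y$. The problem is your ``endgame'': you propose to identify $\sup_n a_n$ with $x$, but the inequality you can actually extract goes the wrong way. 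From $a_n\le x_k$ for $k\ge n$ and $x_k\to x$, closedness of ${\uparrow}a_n$ yields $a_n\le x$ for every $n$, hence $\sup_n a_n\le x$ --- which, combined with $\sup_n a_n\le y$, says nothing about whether $x\le y$. Nothing in your construction forces $x\le\sup_n a_n$; a priori the tail-infima may sit strictly below $x$, and the conclusion $xy=x$ does not follow.

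Two further signs that the plan as stated cannot be the right one: every chain you build is countable, so your argument would only ever invoke ${\uparrow}\w$-completeness, whereas the theorem deliberately assumes ${\uparrow}\w_1$-completeness (your phrase about ``finitely many diagonal adjustments forcing the uncountable bound $\w_1$'' does not produce an uncountable chain); and countable tightness is never used concretely --- ``keeping points accessible by sequences'' is not a step. The actual argument in \cite{BBs} has to manufacture an uncountable (length-$\w_1$) increasing family of elements, using countable tightness at each stage to extract countable sets witnessing closure membership, and only then applies ${\uparrow}\w_1$-completeness; that transfinite construction is precisely the content you have deferred to ``the subtle part of the argument.'' As it stands, the proposal identifies the obstruction (separate versus joint continuity) correctly but does not overcome it.
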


This theorem implies the following partial answer to Problem~\ref{prob2}.

\begin{corollary}\label{c:o1} Let $X$ be a Hausdorff semitopological semilattice whose square $X\times X$ is sequential. If $X$ is ${\downarrow}\w$-complete and ${\uparrow}\w_1$-complete, then the partial order $\le_X$ is closed in $X\times X$.
\end{corollary}

Another partial answer to Problem~\ref{prob2} was given in \cite{BBR}.

\begin{theorem}\label{t:o2} Let $Y$ be a semitopological semilattice satisfying the separation axiom $\vec{T}_{2\delta}$. For any complete subsemilattice $X\subseteq Y$ the partial order $\le_X$ of $X$ is closed in $Y\times Y$.
\end{theorem}

A similar result holds for complete subsemilattices of $\w$-Lawson  semitopological semilattices, see \cite{BBG}.

\begin{theorem}\label{t:03}  For any complete subsemilattice $X$ of a $\w$-Lawson semitopological semilattice $Y$, the partial order $\le_X$ of $X$ is closed in $Y\times Y$.
\end{theorem}

A topologized semilattice $X$ is called 
\begin{itemize}
\item {\em ${\uparrow}$-finite} if for any $x\in X$ the upper set ${\uparrow}x$ is finite;
\item {\em ${\downarrow}$-finite} if for any $x\in X$ the lower set ${\downarrow}x$ is finite;
\item {\em well-founded} if every nonempty subset $A\subseteq X$ contains a minimal element $a\in A$ (which means that $x\not\le a$ for any $x\in A\setminus\{a\}$); 
\item a {\em $U$-semilattice} if for any open set $U\subseteq X$ and point $x\in U$ there exists a point $u\in U$ whose upper  set ${\uparrow}u$ contains $x$ in its interior;
\item a {\em $V$-semilattice} if for any point $x\in X$ and $y\in X\setminus{\downarrow}x$ there exists a point $z\in X\setminus {\downarrow}x$ whose upper set ${\uparrow}z$ contains the point $y$ in its interior.
\end{itemize}

\begin{proposition}\label{p:well}
\begin{enumerate}
\item Each ${\downarrow}$-finite  semilattice is well-founded.
\item Each well-founded semitopological semilattice is a $U$-semilattice.
\item Each $V$-semilattice is $\downarrow$-closed;
\item A topologized $U$-semilattice is a $V$-semilattice if and only if it is $\downarrow$-closed.
\item Each semitopological $V$-semilattice is $1$-Lawson.
\end{enumerate}
\end{proposition}

\begin{proof} 1. Assume that the semilattice $X$ is $\downarrow$-finite. Given any nonempty subset $A\subseteq X$, fix any point $b\in A$ and consider the finite set ${\downarrow}b$ and its nonempty subset $A\cap{\downarrow}b$. Being finite, this set contains a minimal element $a\in A\cap {\downarrow}b$, which remains minimal in the set $A$, witnessing that $A$ is well-founded.
\smallskip

2. Assume that $X$ is a well-founded semitopological semilattice. To show that $X$ is a $U$-semilattice, fix any open set $U\subseteq X$ and point $x\in U$. Since $X$ is well-founded, the nonempty set $U\cap{\downarrow}x\ni x$ contains a minimal element $u\in U\cap{\downarrow}x$. Since $ux=u\in U$ and $X$ is a semitopological semilattice, the point $x$ has a neighborhood $O_x\subseteq X$ such that $uO_x\subseteq U$. Observe that for any $z\in O_x$ we have $uz\in U\cap{\downarrow}u\subseteq U\cap{\downarrow}x$. The minimality of the element $u$ in the set $U\cap{\downarrow}x$ ensures that $u\le uz\le z$ and hence $z\in{\uparrow}u$ and $O_x\subset{\uparrow}u$. Consequently, ${\uparrow}u$ contains $x$ in its interior, witnessing that $X$ is a $U$-semilattice.
\smallskip

3. Assume that $X$ is a $V$-semilattice. To show that $X$ is ${\downarrow}$-closed, we need to check that for any $x\in X$ the lower set ${\downarrow}x$ is closed in $X$. Since $X$ is a $V$-semilattice, for any $y\in X\setminus{\downarrow}x$ there exists an element $z\in X\setminus{\downarrow}x$ whose upper set ${\uparrow}z$ contains some neighborhood $O_y$ of $y$ in $X$.
It follows that $y\in O_y\subset{\uparrow}z\subseteq X\setminus{\downarrow}x$, witnessing that the set ${\downarrow}x$ in closed in $X$.
\smallskip

4. Assume that $X$ is a ${\downarrow}$-closed $U$-semilattice. Then for any $x\in X$ and $y\in X\setminus{\downarrow}x$, the set $W:=X\setminus{\downarrow}x$ is an open neighborhood of $y$. Since $X$ is a $U$-semilattice, the set $W$ contains an element $z\in X\setminus{\downarrow}x$ whose upper set ${\uparrow}z$ contains the point $y$ in its interior, witnessing that $X$ is a $V$-semilattice.
\smallskip

5. Assume that $X$ is a semitopological $V$-semilattice. To prove that $X$ is $1$-Lawson, fix any distinct elements $x,y\in X$. If $x\notin{\downarrow}y$, then we can find an element $z\in X\setminus {\downarrow}y$ whose upper set ${\uparrow}z$ contains $x$ in its interior. Then the closed subsemilattice ${\uparrow}z$ is a neighborhood of $x$ that does not contain $y$. If $x\in{\downarrow}y$, then $y\notin{\downarrow}x$ and we can find a point $v\in X\setminus{\downarrow}x$ whose upper set ${\uparrow}v$ contains $y$ in its interior. Since $X$ is a semitopological semilattice, the closure $\overline{X\setminus{\uparrow}v}$ of the subsemilattice $X\setminus{\uparrow}v$ is a closed subsemilattice in $X$, which is a neighborhood of $x$ that does not contain $y$. In both cases we have found a closed subsemilattice of $X$ which is a neighborhood of $x$ that does not contain $y$. Therefore, the semitopological semilattice $X$ is $1$-Lawson.
\end{proof}

\begin{proposition}\label{p:o4} For any $\uparrow$-closed topologized $V$-semilattice $X$, the partial order $\le_X$ of $X$ is closed in $X\times X$.
\end{proposition}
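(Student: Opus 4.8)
The plan is to prove that the complement of $P$ is open, i.e. that every pair $(a,b)\in X\times X$ with $a\not\le b$ has an open box neighbourhood $U\times V$ disjoint from $P$. The first---and only slightly delicate---step is to feed the $V$-semilattice axiom with the right variables. The hypothesis $a\not\le b$ says precisely that $a\in X\setminus{\downarrow}b$, so I would apply the definition of a $V$-semilattice \emph{to the base point $b$ and the point $a\in X\setminus{\downarrow}b$}. This produces a witness $z\in X\setminus{\downarrow}b$ (that is, $z\not\le b$) such that $a\in\Int({\uparrow}z)$.

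With this witness in hand I would set $U:=\Int({\uparrow}z)$ and $V:=X\setminus{\uparrow}z$. Here the ${\uparrow}$-closedness of $X$ is used exactly once: it guarantees that ${\uparrow}z$ is closed, so $V$ is open, and since $z\not\le b$ means $b\notin{\uparrow}z$, the set $V$ is an open neighbourhood of $b$; meanwhile $U$ is an open neighbourhood of $a$ by the choice of $z$. It then remains to check $(U\times V)\cap P=\emptyset$, which is a one-line transitivity argument: if $x\in U\subset{\uparrow}z$ then $z\le x$, and if $y\in V$ then $y\notin{\uparrow}z$, i.e. $z\not\le y$; were $x\le y$ we would obtain $z\le x\le y$ and hence $z\le y$, a contradiction. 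Thus $x\not\le y$ for every $(x,y)\in U\times V$, so $U\times V$ misses $P$, and $(a,b)$ is an interior point of the complement of $P$. As $(a,b)$ was arbitrary, $P$ is closed.

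I do not expect a genuine obstacle: the statement is essentially a direct unwinding of the two hypotheses, and the semilattice structure is not really used beyond supplying the ambient order---the argument is purely order-topological. The one place that requires care is the bookkeeping in the application of the $V$-semilattice axiom, where the roles of the two coordinates are interchanged relative to the order (the first coordinate $a$ of the off-diagonal pair plays the role of the axiom's ``$y$'', and the second coordinate $b$ plays the role of the base point ``$x$''). Getting this swap right is exactly what makes ${\uparrow}z$ separate $a$ from $b$ in the correct direction, and ${\uparrow}$-closedness is precisely the ingredient that turns the complement of ${\uparrow}z$ into the open neighbourhood $V$ of $b$.
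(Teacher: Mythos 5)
Your argument is correct and is essentially identical to the paper's proof: both apply the $V$-semilattice axiom with base point $b=y$ to the point $a=x\in X\setminus{\downarrow}y$, obtain $z\not\le y$ with $x\in({\uparrow}z)^\circ$, and take the box $({\uparrow}z)^\circ\times(X\setminus{\uparrow}z)$, using ${\uparrow}$-closedness only to make the second factor open. Your explicit remark about which coordinate plays which role in the axiom is the only point where care is needed, and you have it right.
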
 

\begin{proof} Given any pair $(x,y)\notin\; \le_X$, we conclude that $x\not\le y$ and hence $x\notin{\downarrow}y$. Since $X$ is a $V$-semilattice, there exists a point $z\in X\setminus{\downarrow}y$ such that ${\uparrow}z$ contains $x$ in its interior $({\uparrow}z)^\circ$. 
Then $U_x:=({\uparrow}z)^\circ$ and $U_y=X\setminus{\uparrow}z$ are two open sets in $X$ such that $U_x\times U_y$ is disjoint with $P$. So $\le_X$ is closed in $X\times X$.
\end{proof}

Propositions~\ref{p:o4} and \ref{p:well} imply

\begin{corollary}\label{c:U} For a semitopological $U$-semilattice $X$ the following conditions are equivalent:
\begin{enumerate}
\item the partial order $\le_X$ of $X$ is closed in $X\times X$;
\item $X$ is Hausdorff;
\item  $X$ is ${\downarrow}{\uparrow}$-closed.
\end{enumerate}
\end{corollary}

\begin{proof} The implication $(1)\Ra(2)$ is well-known and its proof can be found in \cite[VI-1.4]{Bible}.
\smallskip

$(2)\Ra(3)$ If $X$ is Hausdorff, then for any $x\in X$ the sets ${\downarrow}x=\{y\in X:xy=y\}$ and ${\uparrow}x=\{y\in X:xy=x\}$ are closed by the continuity of the shift $s_x:X\to X$, $s_x:y\mapsto xy$.
\smallskip

$(3)\Ra(1)$ Assume that $X$ is ${\downarrow}{\uparrow}$-closed. By Proposition~\ref{p:well}(4), $X$ is a $V$-semilattice and by Proposition~\ref{p:o4}, the partial order $\le_X$ in closed in $X\times X$.
\end{proof}

Corollary~\ref{c:U} and Proposition~\ref{p:well} imply the following corollaries.

\begin{corollary} For any well-founded semitopological semilattice $X$ the following conditions are equivalent:
\begin{enumerate}
\item the partial order $\le_X$ of $X$ is closed in $X\times X$;
\item $X$ is Hausdorff;
\item  $X$ is ${\downarrow}{\uparrow}$-closed.
\end{enumerate}
\end{corollary}

\begin{corollary}\label{c:down-finite} For any $\downarrow$-finite semitopological semilattice $X$ the following conditions are equivalent:
\begin{enumerate}
\item the partial order $\le_X$ of $X$ is closed in $X\times X$;
\item $X$ is Hausdorff;
\item  $X$ is ${\downarrow}{\uparrow}$-closed.
\end{enumerate}
\end{corollary}

Corollary~\ref{c:o1}, Theorems~\ref{t:o2}, \ref{t:03} and Proposition~\ref{p:o4} imply the following partial answer to Problem~\ref{prob2}.

\begin{corollary}\label{c:final2} For a complete semitopological semilattice $X$, its partial order $\le_X$ is closed in $X\times X$ if one of the following conditions is satisfied:
\begin{enumerate}
\item $X$ is a Hausdorff topological semilattice;
\item the topologized semilattice $X$ is $\w$-Lawson;
\item the topological space $X\times X$ is sequential and Hausdorff;
\item the topological space $X$ satisfies the separation axiom $\vv{T}_{2\delta}$;
\item the topological space $X$ is functionally Hausdorff.
\end{enumerate}
\end{corollary}

The following  examples constructed in \cite{BBR2} and \cite{BBR3} show that the completeness of $X$ in Corollary~\ref{c:final2} is essential, and Corollary~\ref{c:down-finite} has no counterparts for $\uparrow$-finite semitopological semilattices.

\begin{example}[\cite{BBR2}] There exists an ${\uparrow}$-finite metrizable semitopological semilattice $X$ whose partial order $\le_X$ is a dense non-closed subset of $X\times X$.
\end{example}

\begin{example}[\cite{BBR3}] There exists a ${\uparrow}$-finite metrizable Lawson semitopological semilattice $X$ whose partial order $\le_X$ is not closed in $X\times X$.
\end{example}

Corollary~\ref{c:final2}(2) motivates the following problem first posed in \cite{BBG}.

\begin{problem} Is the partial order $\le_X$ of an $\w_1$-Lawson complete semitopological semilattice $X$ closed in $X\times X$?
\end{problem}


\end{document}